\documentclass[11pt]{amsart}

\usepackage[utf8]{inputenc}
\usepackage[backend=biber,style=alphabetic,maxbibnames=50]{biblatex}
\usepackage{amssymb,amsfonts}
\usepackage{amsmath,amscd}
\usepackage[all,arc]{xy}
\usepackage{enumerate}
\usepackage{mathrsfs}
\usepackage[pdftex]{graphicx}
\usepackage[T1]{fontenc}
\usepackage[usenames,dvipsnames]{color}
\usepackage[bookmarks=false]{hyperref}
\usepackage{dsfont}
\usepackage{tikz-cd}
\usetikzlibrary{automata,positioning}
\usepackage{fullpage}
\usepackage{wasysym}

\theoremstyle{plain}
\newtheorem{thm}{Theorem}[section]

\newtheorem{prop}[thm]{Proposition}
\newtheorem{lem}[thm]{Lemma}

\theoremstyle{definition}

\newtheorem{aDD^+m}[thm]{ADD^+endum}

\theoremstyle{remark}
\newtheorem{rmk}[thm]{Remark}

\newcommand*{\defeq}{\mathrel{\vcenter{\baselineskip0.5ex \lineskiplimit0pt
			\hbox{\scriptsize.}\hbox{\scriptsize.}}}%
	=}

\DeclareMathOperator{\VD}{VD}

\DeclareMathOperator{\Jac}{Jac}

\DeclareMathOperator{\Leb}{Leb}

\title{A Ruelle-McMullen formula for the
 volume dimension\\
  of skew products in $\mathbb C^2$}

\author{Fabrizio Bianchi}
\address{Dipartimento di Matematica, Università di Pisa, Largo Bruno Pontecorvo 5, 56127 Pisa, Italy}
 \email{fabrizio.bianchi$@$unipi.it}
\author{Yan Mary He}
\address{Department of Mathematics\\
	University of Oklahoma\\
	Norman, OK 73019}
\email{he$@$ou.edu}
\date{\today}

\begin{document}

\begin{abstract}
Ruelle gave an explicit second-order expansion at $c=0$
of the Hausdorff dimension of the Julia set of the quadratic family
$f_c(z)=z^2+c$.
McMullen later extended this result to polynomial perturbations of
$z^d$
 for 
arbitrary degree $d\geq 2$.
In this paper we study an analogue of this problem for
 skew products in
$\mathbb C^2$.
Since holomorphic dynamical systems in higher dimensions are non-conformal, we replace the
Hausdorff dimension by the \emph{volume dimension}, a dynamically defined notion
we introduced in our earlier work and characterized as the zero of a natural pressure function.
We consider families of holomorphic skew products of the form
\[
f_t(z,w)=(z^d,
 w^d+t(c_1 (z) w^{d-1} +c_2(z)w^{d-2} + \cdots+c_d(z))).
\]
Our main result gives an explicit second-order expansion of the volume dimension of the
Julia set $J(f_t)$ as $t\to0$
 in terms of the coefficients $c_k(z)$.
 \end{abstract}

\maketitle

\section{Introduction}

In the early 1980s, Ruelle \cite{Ruelle82} initiated the study of the variation of the Hausdorff
dimension of Julia sets in holomorphic families. For the quadratic family
$f_c(z)=z^2+c$, he proved that the Hausdorff dimension of the Julia set admits the
expansion
\[
\mathrm{H.dim}(J_c)=1+\frac{|c|^2}{4\log 2}+O(|c|^3), \qquad |c|\to0.
\]
In particular, the Hausdorff dimension function has a strict local minimum at the
monomial map $z\mapsto z^2$.
McMullen
\cite{McMullen08}
 generalized Ruelle’s result to polynomials of arbitrary degree, obtaining
an explicit formula for the second derivative of the Hausdorff dimension at the monomial
$z\mapsto z^d$. Namely, 
he showed that the Hausdorff dimension of the Julia set of the map
$f_t(z) = z^d +t(c_2z^{d-2}+c_3z^{d-3}+\cdots+c_d)$
satisfies
\[ \mathrm{H.dim} (J_t) = 1 + \frac{ |t|^2}{4d^2\log d}
\sum_{k=2}^dk^2|c_k|^2 + O(|t|^3), \qquad |t|\to0.\] 

The purpose of this paper is to investigate an analogue of this problem for
polynomial-like 
skew products in
$\mathbb C^2$.
More precisely, for $t$ near $0$ we consider families of the form
\begin{equation}\label{eq:form-sk}
f_t(z,w)=(z^d,\; w^d+t(c_1 (z) w^{d-1} + c_2(z)w^{d-2}+\cdots+c_d(z))),
\end{equation}
where the $c_j$'s are holomorphic functions in $z$,
and we study the variation of the dimension of their Julia sets. Examples of these maps
are the
regular polynomial skew products,
 i.e., polynomial endomorphisms
 of $\mathbb C^2$ of the form \eqref{eq:form-sk} 
 which extend 
 holomorphically to
 endomorphisms
of $\mathbb P^2(\mathbb C)$, see \cite{BedfordJonsson,Jonsson99}.

Contrary to the one-dimensional case, in higher-dimensional holomorphic dynamics
 the Hausdorff dimension is no
longer well adapted to the dynamics. 
Holomorphic maps in dimension greater than one are
non-conformal, and basic tools such as Koebe’s distortion theorem are no longer available.
As a consequence, the Hausdorff dimension may fail to be dynamically meaningful, and its
behaviour in parameter space is poorly understood.

To address this issue, in an earlier work \cite{BHMane} we introduced the notion of
\emph{volume dimension} for invariant sets and measures in higher-dimensional holomorphic
dynamical systems. The volume dimension coincides 
(up to a factor  of $2$)
with the Hausdorff dimension in complex
dimension one, but incorporates the non-conformal geometry of higher-dimensional systems.
In particular, it satisfies a Mañé--Manning type formula relating volume dimension,
entropy, and Lyapunov exponents, and can be characterized as the zero of a natural
pressure function for expanding invariant measures. For hyperbolic maps, this zero
coincides 
(up to the same factor of $2$)
with the volume dimension of the Julia set.

In this paper we study the real-analytic function
\[
t \longmapsto \VD_{f_t}(J(f_t)),
\]
where $J(f_t)$ denotes the Julia set of $f_t$.
Our main result 
gives
an explicit second-order expansion
of the volume dimension of $J(f_t)$
at $t=0$.

\begin{thm}\label{thm_main_sk}
Let $f_t$ be the family as in \eqref{eq:form-sk}.
Then
\[
\VD_{f_t}(J(f_t))
=
\frac12
+
\frac{|t|^2}{16 d^2\log d}
\int_{S^1}\sum_{k=1}^d k^2|c_k(z)|^2\,d\Leb_1(z)
+ O(|t|^3),
\qquad |t|\to0
\]
where $\Leb_1$ denotes the normalized 
Lebesgue
probability  measure on $S^1$.
\end{thm}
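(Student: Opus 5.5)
The plan is to characterize $\VD_{f_t}(J(f_t))$ through the pressure function from \cite{BHMane}. For a hyperbolic map it is (up to the factor $2$) the zero $\delta_t$ of
\[
P_t(\delta) = P\bigl(-\delta \log|\Jac f_t|\bigr),
\]
taken for $f_t$ on $J(f_t)$. The normalization has to be fixed so that $\VD_{f_0}(J(f_0))=\frac12$ at $t=0$. There $J(f_0)=S^1\times S^1$, the Jacobian is $|\Jac f_0|=d^2$ on the torus, and the entropy is $2\log d$. So the zero of $2\log d-\delta\cdot 2\log d$ is $\delta=1$, and with the factor $\frac12$ we get $\VD=\frac12$.

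The family is hyperbolic for small $t$: $f_0$ is expanding on the torus, and $f_t$ is a small perturbation. Hence there is a holomorphic motion, namely a conjugacy $h_t\colon S^1\times S^1\to J(f_t)$ that depends holomorphically on $t$ and is real-analytic in $t$ jointly. The skew structure means $h_t(z,w)=(z,\phi_t(z,w))$, and the Jacobian factors as $\Jac f_t = dz^{d-1}\cdot \partial_w q_t(z,w)$, where $q_t$ is the second coordinate. Pulling back gives the potential on the torus
\[
\psi_t = \log d + \log\bigl|\partial_w q_t(z,\phi_t(z,w))\bigr|.
\]
The first term is constant in $t$ (its value on $|z|=1$ is exactly $\log d$), so all the $t$-dependence sits in the fibre derivative. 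We then apply the standard second-order perturbation formula for the zero of pressure, exactly as in McMullen's argument:
\begin{itemize}
\item The first-order term in $\delta_t$ vanishes. The reason is that $\psi_t$ is the real part of a holomorphic function of $t$, and its $t$-derivative has zero mean against the measure of maximal entropy $\Leb\otimes\Leb$.
\item The second-order term is governed by two quantities: the asymptotic variance of $\partial_t\psi_t|_{t=0}$ with respect to Lebesgue on the torus, and the second derivative of the mean.
\end{itemize}

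The key computation is the expansion of $\phi_t$ to first order. We write $\phi_t(z,w)=w+t\,v(z,w)+O(t^2)$ and obtain from the conjugacy equation $\phi_t(z^d,w^d)=\phi_t(z,w)^d+t\sum_k c_k(z)\phi_t^{d-k}$ the twisted cohomological equation
\[
v(z^d,w^d)= d w^{d-1}v(z,w)+\sum_k c_k(z)w^{d-k}.
\]
This is solved by the series $v(z,w)=-\sum_{n\ge0} \dfrac{\sum_k c_k(z^{d^n})\,(w^{d^n})^{d-k}}{d\,w^{d-1}\,\prod \dots}$, in direct analogy with McMullen's one-variable formula but with $z$ carried along by $z\mapsto z^{d}$. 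The derivative of the potential is then
\[
\partial_t \psi_t|_0 = \mathrm{Re}\Bigl(\frac{(d-1)v}{w}+\frac{\sum_k (d-k)c_k(z)w^{-k}}{d}\Bigr).
\]
Expanding everything in Fourier modes $z^a w^b$ on the torus, each term involves $w$-frequencies of the form $-k\,d^n$ with $k\ge1$. We then compute the Green–Kubo sum $\sum_n \int g\cdot g\circ f_0^n$ via orthogonality in $w$. The $c_k(z)$ enter only through $\int_{S^1}|c_k(z)|^2\,d\Leb_1$, because for distinct $k$ the $w$-frequencies $k d^n$ cannot coincide with $k' d^{m}$ for terms that survive after averaging; one still has to check the collisions $k d^n = k' d^m$. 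The result should match McMullen's constant $\sum_k k^2\,\|c_k\|^2/(4d^2\log d)$, now carrying an extra factor $\frac14$: a factor $\frac12$ comes from the normalization of $\VD$, and a further $\frac12$ from the fact that the entropy $2\log d$ appears in the denominator of the derivative formula, $\delta''=\frac{\mathrm{Var}}{\int\psi}$ with $\int \psi = 2\log d$.

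The main obstacle is this Fourier bookkeeping. Two things have to be handled:
\begin{itemize}
\item the frequency collisions $k d^n=k'd^m$, which must cancel in the same way as in McMullen's proof, where $c_1$ was absent and here it is present;
\item the $z$-dependence: the coefficient $c_k(z)$ is composed with $z^{d^n}$, and orthogonality in $z$ is needed to reduce the cross terms to $\int |c_k|^2$.
\end{itemize}
I would first treat the $c_k$ as constants, reproducing McMullen's computation with the torus normalization, and then verify that the $z$-averaging only replaces $|c_k|^2$ by its Lebesgue mean. The $O(|t|^3)$ error follows from real-analyticity of $\delta_t$ in $t$.
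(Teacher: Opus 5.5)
\textbf{Verdict: genuine gap.} The step you flag yourself, the frequency collisions, cannot be closed, and in fact the statement is false as written.

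Your plan has the same architecture as the paper's proof:
\begin{itemize}
\item $\VD=\delta_t/2$ with $\delta_0=1$, and McMullen's second-derivative formula for the zero of pressure;
\item the twisted equation $v(z^d,w^d)=dw^{d-1}v(z,w)+\sum_k c_k(z)w^{d-k}$;
\item orthogonality in $w$ followed by averaging in $z$.
\end{itemize}
Summing correlations directly, instead of using the virtual coboundary $\Re(\partial_w v)$ and the growth of $\int_{|w|=r}|\partial_w v|^2$ as $r\to1^+$, is an inessential variation. Your factor $\frac14$ relative to McMullen is also right.

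In McMullen's setting, $2\le k\le d$, there are no collisions to cancel: $kd^n=k'd^m$ with $n>m$ forces $k'\ge 2d$. With $k=1$ present, the collisions are exactly $1\cdot d^{m+1}=d\cdot d^m$, and both colliding terms survive and add. Iterating the equation gives $v_z(w)=-\frac wd\sum_{k}\sum_{n\ge0}d^{-n}c_k(z^{d^n})\,w^{-kd^n}$. So the coefficient of $w^{-d^{m+1}}$ in $\partial_w v$ is
\[
\frac1d\Bigl[(1-d^{-m-1})\,c_1\bigl(z^{d^{m+1}}\bigr)+(d-d^{-m})\,c_d\bigl(z^{d^m}\bigr)\Bigr].
\]
Hence the mass per level is not $\frac1{d^2}\sum_k k^2\|c_k\|^2$ but
\[
\frac{1}{d^2}\Bigl(\sum_{k=2}^{d-1}k^2\|c_k\|^2+\|c_1\circ p+d\,c_d\|^2\Bigr),
\]
with $p(z)=z^d$ and norms in $L^2(\Leb_1)$. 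Equivalently, the integrand $\sum_k k^2|c_k|^2$ acquires the cross term $2d\,\Re\bigl(c_1(z^d)\overline{c_d(z)}\bigr)$.

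So your computation, done correctly, cannot give the stated formula. A counterexample is $d=2$, $c_1\equiv2$, $c_2\equiv-1$:
\begin{itemize}
\item The translation $(z,w)\mapsto(z,w+t)$ has Jacobian $1$ and conjugates $f_t$ to $(z^2,u^2-t^2)$.
\item Hence $\delta_t$ equals the value for $(z^2,u^2+c)$ with $c=-t^2$. That value is $1+O(|c|^2)$ by Ruelle's expansion, or by $\dot\delta_0=0$ in the variable $c$.
\item Thus $\VD_{f_t}(J(f_t))=\frac12+O(|t|^4)$. Consistently, $v\equiv-1$, $\partial_w v\equiv0$ and $\partial_t\psi_t|_{t=0}\equiv0$.
\item The statement instead predicts $\frac12+\frac{|t|^2}{8\log 2}$.
\end{itemize}
The paper's own proof fails at the same point, where it asserts that all the exponents are distinct. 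The formula holds only under an extra hypothesis such as $c_1\equiv0$, or more generally $\Re\int_{S^1}c_1(z^d)\overline{c_d(z)}\,d\Leb_1(z)=0$.

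A smaller point concerns the mean. You need both $\dot\delta_0=0$ and the vanishing of the second derivative of the mean of your $\psi_t$; you never address the latter. Both come from $\int\psi_t\,dm_0\equiv2\log d$. This is Jonsson's formula for the sum of Lyapunov exponents. Equivalently, $\log|\partial_w q_t\circ\hat H_t|-(d-1)\log|w|$ is harmonic on $\{|w|>1\}\cup\{\infty\}$ with value $\log d$ at $\infty$. It does not follow merely from $\psi_t$ being the real part of a function holomorphic in $t$.
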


\subsection*{Main ingredients and structure of the proof}

The proof follows the general strategy of
McMullen, adapted to the
non-conformal setting of polynomial skew products.

The starting point is the characterization of the volume dimension as the zero of a
pressure function. Writing $\delta_t$ for this zero, the problem reduces to computing
the second derivative of $\delta_t$ at $t=0$.
A general second-derivative formula allows  one to
 express
  $\ddot\delta_0$ in terms of the variance of
the infinitesimal deformation $\dot \phi_0$
of the geometric potential $\phi_t = -\log |{\Jac} f_t|$, see 
Proposition
 \ref{thm_delta_second_derivative}.

A key observation, inspired by McMullen’s work
\cite{McMullen08}, is that this infinitesimal deformation
$\dot \phi_0$ 
is
not an arbitrary H\"older function, but a \emph{virtual coboundary}: it
is the trace on the Julia set 
of a
dynamically defined H\"older function $h$
 on the basin of infinity, which
 can be written as
the difference of a function $g$ 
and its pull-back by the dynamics. 
This structure allows us to express the variance we need
in terms of the
asymptotic growth of the function $g$ near the boundary of the basin of infinity, see 
Proposition \ref{thm_main_2}.
A direct computation of this behaviour, see Proposition \ref{prop_extra_term},
 then completes the proof of Theorem \ref{thm_main_sk}.

\subsection*{Acknowledgements}
This project has received funding from
 the
 Programme
 Investissement d'Avenir
(ANR QuaSiDy /ANR-21-CE40-0016,
ANR PADAWAN /ANR-21-CE40-0012-01,
ANR TIGerS /ANR-24-CE40-3604),
 from the MIUR Excellence Department Project awarded to the Department of Mathematics of the University of Pisa, CUP I57G22000700001.
The first
author is affiliated to the GNSAGA group of INdAM.

\section{Preliminaries}\label{sec_prel}

\subsection{Holomorphic skew products}\label{sec_fibered}

Throughout this paper, we work with holomorphic skew products 
$F\colon\mathbb C^2 \to \mathbb C^2 $
of the form
\begin{equation}\label{eq:F-skew}
F(z,w)=(z^d,q(z,w)) = \Big(z^d, w^d + \sum_{k=1}^d c_k(z)w^{d-k}\Big),
\end{equation}
where $d\geq 2$
and each $c_k(z)$ is holomorphic in a neighbourhood of the closed unit disc
$\overline{\mathbb D}$\footnote{All the results of this section are still valid when $z^d$ is replaced by a hyperbolic polynomial $p$, not necessarily of degree $d$. We 
just describe the case $p(z)=z^d$ for simplicity and since we will only need this case in the sequel.}.
We denote the set of holomorphic skew products as above by $\mathcal S(d)$.
Restricting any $F\in \mathcal S(d)$
 to $S^1\times\mathbb C$,
 we obtain a \emph{fibered polynomial map} over
 the map
 $z\mapsto z^d$, namely
 a continuous map
\[
f\colon S^1\times\mathbb C\longrightarrow S^1\times\mathbb C,
\qquad
f(z,w) = \bigl(z^d, q_z(w)\bigr),
\]
where $q_z\defeq q(z,\cdot)$.
For every such $f$,
 the \emph{fiberwise Green function} is defined by
\[
G(z,w) = G_z(w)
\;\defeq\;
\lim_{n\to\infty}\frac{1}{d^n}\log^+|q_z^{\,n}(w)|,
\qquad
\mbox{ where }
\log^+|w|\defeq\max\{\log|w|,0\}.
\]
The function $G$ is continuous on $S^1\times\mathbb C$. For each $z\in S^1$,
 the function $G_z$ is subharmonic, and equals $0$ on the set $K_z$ of points with bounded orbit. It 
is harmonic on the
open set $W_z \defeq \mathbb C \setminus K_z = \{G_z>0\}$.
The \emph{fiberwise Julia set} is $J_z(f)\defeq\partial K_z = \partial W_z$, 
and the Julia set of $f$ is
\[
J(f) \defeq \overline{\bigcup_{z\in S^1} \{z\}\times J_z(f)} \;\subset S^1\times\mathbb C.
\]
We also set $J(F)= J(f)$.

\medskip

We will only consider
fibered polynomial maps for which $K_z$ is connected for every $z\in S^1$.
 Equivalently, 
for every $z\in S^1$,
 all critical points of $q_z$ have bounded orbit.
 We call any $f$ with this property \emph{fiberwise
connected}.
In this case, every $W_z\cup \{\infty\}$ is simply connected and 
  biholomorphic to the unit disc.
   We will denote by $\Delta$
the unit disc in $\mathbb C$ and by $1/\Delta$ the complement of the closed unit disc in $\mathbb C$.
The following result is due to Sester \cite{Sester99}.

\begin{prop}[{\cite[Proposition~2.7]{Sester99}}]\label{prop_sester}
For every 
fiberwise connected
 $f$ as above
and $z\in S^1$, there exists a unique holomorphic isomorphism
$\varphi_z \colon W_z \longrightarrow 1/\Delta$
such that:
\begin{enumerate}
\item the diagram
\[
\begin{tikzcd}
W_z \arrow{r}{q_z} \arrow[swap]{d}{\varphi_z} &
W_{z^d} \arrow{d}{\varphi_{z^d}} \\
1/\Delta \arrow{r}{w^d} &
1/\Delta
\end{tikzcd}
\]
commutes;
\item the map $\varphi\colon W\to\mathbb C$ defined by $\varphi(z,w)=\varphi_z(w)$ is continuous;
\item $G(z,w)=\log|\varphi_z(w)|$ for all $(z,w)\in W$;
\item $\varphi_z$ is tangent to the identity at infinity.
\end{enumerate}
\end{prop}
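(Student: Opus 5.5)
The plan is to build $\varphi_z$ fiberwise as a Böttcher coordinate, by the same limiting procedure as in the one-dimensional case, and then check that the construction depends continuously on $z$. There are three steps.

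\emph{Step 1: local Böttcher coordinate near infinity.} Since $q_z(w)=w^d(1+\sum_k c_k(z)w^{-k})$ and the $c_k$ are bounded on $S^1$, there is a radius $R>0$, independent of $z\in S^1$, such that for $|w|>R$ we have
\[
|q_z(w)|\geq 2|w|, \qquad \Bigl|\frac{q_z(w)}{w^d}-1\Bigr|<\frac12 .
\]
Write $q_z^n\defeq q_{z^{d^{n-1}}}\circ\cdots\circ q_z$ for the fibered iterate. Define
\[
\varphi_z(w)=\lim_{n\to\infty}\bigl(q_z^n(w)\bigr)^{1/d^n}
= w\prod_{n\geq0}\Bigl(\frac{q_{z^{d^n}}(w_n)}{w_n^d}\Bigr)^{1/d^{n+1}},
\qquad w_n\defeq q_z^n(w),
\]
using the principal branch of the root of a quantity in the disc of radius $1/2$ about $1$. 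The factors satisfy $|\log(\cdot)|\leq C/|w_n|$, and $|w_n|$ grows at least geometrically, uniformly in $z$. Hence the product converges uniformly on $S^1\times\{|w|>R\}$. This gives three facts at once: $\varphi_z$ is holomorphic in $w$, jointly continuous in $(z,w)$, and $\varphi_z(w)=w+O(1)$, so it is tangent to the identity at infinity. Reindexing the product gives the functional equation $\varphi_{z^d}(q_z(w))=\varphi_z(w)^d$. Taking $\log|\cdot|$ shows $G_z=\log|\varphi_z|$ there, because $\frac1{d^n}\log|q_z^n(w)|\to G_z(w)$.

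\emph{Step 2: extension to all of $W_z$.} Extend by pullback, $\varphi_z(w)\defeq\varphi_{z^{d^n}}(q_z^n(w))^{1/d^n}$ for $n$ large enough that $q_z^n(w)$ lies in the region $|\cdot|>R$. This is the main obstacle, since the $d^n$-th root has to be chosen consistently. I would do it by analytic continuation along level sets of $G_z$, following Milnor's argument. The function $\varphi_z$ continues along the region $\{G_z>s\}$ as $s$ decreases. The only obstruction would be a critical point of $G_z$, equivalently a critical point of some $q_{z^{d^j}}$ escaping to infinity, and such points are excluded by fiberwise connectedness. So $\varphi_z$ extends to all of $W_z=\{G_z>0\}$, where $W_z\cup\{\infty\}$ is simply connected. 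The relations $G_z=\log|\varphi_z|$ and the commutative diagram persist by the identity principle. Continuity in $z$ then follows by writing $\varphi$ locally as a composition of the continuous local coordinate from Step 1 with $q^n$, with branches chosen continuously.

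\emph{Step 3: bijectivity and uniqueness.} Since $|\varphi_z|=e^{G_z}\to1$ at $\partial W_z$ and $\to\infty$ at $\infty$, the map $\varphi_z$ is proper from $W_z\cup\{\infty\}$ onto $\widehat{\mathbb C}\setminus\overline\Delta$. It has degree one because its only preimage of $\infty$ is $\infty$, with local degree one by tangency to the identity, so it is an isomorphism onto $1/\Delta$. For uniqueness, suppose $\psi_z$ is another such family. Then $\psi_z\circ\varphi_z^{-1}$ is an automorphism of $1/\Delta$ fixing $\infty$ and tangent to the identity there, hence it is the identity.
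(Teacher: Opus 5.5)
The paper gives no proof of this statement; it quotes it from \cite{Sester99}, so there is no internal argument to compare with. Your argument is the natural fibered version of the classical B\"ottcher construction, and it is essentially correct. The uniform escape radius makes the product converge uniformly on $S^1\times\{|w|>R\}$, which gives (1), (3), (4) and joint continuity near infinity. Monodromy on the simply connected $W_z\cup\{\infty\}$ gives a single-valued extension; continuation along paths never fails, because $\varphi_{z^{d^n}}\circ q_z^n$ is nonvanishing and so always has local $d^n$-th roots. Properness together with the simple pole at $\infty$ gives bijectivity onto $1/\Delta$. Uniqueness follows because automorphisms of $1/\Delta$ are rotations.

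The one step you assert rather than prove is continuity of $\varphi$ away from $\{|w|>R\}$. For fixed $z$ the branch of the root is fixed by continuation from infinity. Near $(z_0,w_0)$ you only know a priori that $\varphi_z=\omega(z)\,h(z,\cdot)$, with $\omega(z)$ a $d^n$-th root of unity that might jump with $z$. Saying the branches are ``chosen continuously'' presupposes the continuity you want. Here is how to close it:
\begin{enumerate}
\item Join $w_0$ to a point $w_1$ with $|w_1|>R$ by an embedded path in $W_{z_0}$. Since $W=\{G>0\}$ is open, there are an arc $A$ around $z_0$ and a simply connected neighbourhood $U$ of the path with $\overline A\times\overline U\subset W$, so $G\geq c>0$ there.
\item The relation $G_{z^{d^n}}\circ q_z^n=d^n G_z$, together with boundedness of $G$ on $S^1\times\{|w|\leq R\}$, gives a single $n$ with $|q_z^n(w)|>R$ on $A\times U$.
\item Since $A\times U$ is simply connected, $(z,w)\mapsto\varphi_{z^{d^n}}(q_z^n(w))$ has a continuous $d^n$-th root $h$ there. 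On $A\times D$, with $D\ni w_1$ a small disc in $\{|w|>R\}$, the ratio $\varphi/h$ is continuous with values in the roots of unity, hence constant. Normalize $h$ so that $h=\varphi$ on $A\times D$.
\item For each fixed $z\in A$, both $\varphi_z$ and $h(z,\cdot)$ are continuous $d^n$-th roots of the same function on the connected set $U$, by the functional equation extended to $W_z$. They agree on $D$, so they agree on $U$. Hence $\varphi=h$ on $A\times U$, which is continuous.
\end{enumerate}
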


\begin{rmk}
Let $F(z,w)=(z^d,q(z,w))$ be a holomorphic skew product as in
\eqref{eq:F-skew}.
Then there exist domains $U\Subset V\subset\mathbb C^2$ such that the restriction
$F\colon U\to V$ is a proper holomorphic map.
In particular, $F$ admits a polynomial-like restriction
in the sense of Dinh--Sibony~\cite{DS03}, whose
topological
 degree equals $d^2$,
and the results of \cite{BHMane} apply in this context, see
in particular \cite[Remark 1.4]{BHMane}.

A particularly relevant special case
of holomorphic skew products as in \eqref{eq:F-skew} 
is given by maps for which
each coefficient $c_j(z)$ is a polynomial of degree at most $d-j$.
In this case, $F$ extends to a holomorphic endomorphism of $\mathbb P^2$
(i.e., it is a \emph{regular polynomial skew product} in the sense of \cite{BedfordJonsson, Jonsson99}).
We will denote by $\mathcal P(d)$ 
the space of such polynomial skew products
 endowed with the topology of uniform convergence of coefficients on compact
subsets of $\mathbb C$.
\end{rmk}

\subsection{Uniform hyperbolicity and vertical expansion}\label{sec_expansion}

Following Jonsson~\cite{Jonsson99}, we say that a holomorphic skew product
$F(z,w)=(z^d,q(z,w))$
is \emph{vertically expanding} if there exist a neighbourhood $U$ of the Julia set $J(F)$
and constants $c>0$ and $\lambda>1$ such that
\[
\|D_w F^n(z,w)\|\ge c\,\lambda^n
\quad \text{for all } (z,w)\in U \text{ and all } n\ge1.
\]
In this case, the dynamics is uniformly expanding in the vertical direction near the Julia set.
The following result follows
 from standard arguments in hyperbolic dynamics.
 We refer to \cite{Jonsson99} for several further
equivalent characterizations of vertical expansion.

\begin{prop}\label{prop_vert_exp_hyp}
A skew product $F$ as in \eqref{eq:F-skew} 
is vertically expanding if and only if its Julia set $J(F)$
is a uniformly hyperbolic repeller.
\end{prop}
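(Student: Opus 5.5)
The statement to prove is Proposition \ref{prop_vert_exp_hyp}: vertical expansion is equivalent to $J(F)$ being a uniformly hyperbolic repeller. The plan is to handle the two implications separately. The key structural input is the skew-product form: $DF$ is lower block-triangular, with diagonal entries $d z^{d-1}$ (horizontal) and $\partial_w q$ (vertical). Moreover, $J(F)\subset S^1\times\mathbb C$, and the base map $z\mapsto z^d$ is uniformly expanding on a neighbourhood of $S^1$, with $|(z^{d^n})'|=d^n|z|^{d^n-1}\cdot\ldots = d^n$ on $S^1$.

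\emph{Hyperbolic repeller $\Rightarrow$ vertical expansion.} Assume $\|(DF^n)^{-1}\|\le C\mu^{-n}$ on $J(F)$ for some $\mu>1$. The vertical line field $\{0\}\times\mathbb C$ is $DF$-invariant. Hence for any unit vertical vector $v$ we get $\|DF^n v\|\ge C^{-1}\mu^n$, and $DF^n v=(0,\partial_w q^n_z(w)\,v)$. This gives $|\partial_w q_z^n(w)|\ge C^{-1}\mu^n$ on $J(F)$. To pass to a neighbourhood $U$, I would use continuity in the standard way. First pick $N$ with $C^{-1}\mu^N\ge 2$. Then $|\partial_w q^N|\ge 3/2$ holds on a neighbourhood $U_0$ of $J$. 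Next I need orbits that stay near $J$ for $N$ steps, which holds on a smaller neighbourhood $U\subset U_0$ with $F^j(U)\subset U_0$ for $j<N$. Orbits that leave $U$ are not an issue: the definition quantifies over points of $U$ and all $n$, so I take $U$ to be a suitable forward-invariant-up-to-escape neighbourhood. Points in the basin of infinity near $J$ have $|\partial_w q^n|$ growing anyway, since $q_z^n$ behaves like $w^{d^n}$ near infinity. Combining the block of length $N$ with bounded distortion for the remaining $<N$ steps gives $\|D_wF^n\|\ge c\lambda^n$ with $\lambda=(3/2)^{1/N}$.

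\emph{Vertical expansion $\Rightarrow$ hyperbolic repeller.} I would build an invariant cone field. On $J$, the horizontal factor satisfies $|(z^{d^n})'|=d^n$, which is uniformly expanding, and the vertical factor expands at rate $c\lambda^n$. The off-diagonal term $\partial_z q$ is bounded on a compact neighbourhood. Since $DF^n$ is lower triangular, its inverse is also lower triangular, with diagonal entries $d^{-n}$ and $(\partial_wq^n)^{-1}$ and off-diagonal entry
\[
-\frac{\partial_z q^n}{d^n\,\partial_w q^n}
\]
(here $\partial_z q^n$ denotes the lower-left entry of $DF^n$). Expanding that entry via the chain rule gives
\[
\sum_{j} \frac{\text{(bounded)}}{(\text{horizontal gain over } j \text{ steps})\cdot(\text{vertical gain over } n-j \text{ steps})},
\]
and each term is bounded by $C\,\mathrm{const}\cdot\min(d,\lambda)^{-n}$ times a geometric factor, so the sum is controlled. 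Together these bound $\|(DF^n)^{-1}\|\le C'\nu^{-n}$ with some $1<\nu<\min(d,\lambda)$, which is uniform hyperbolicity (repelling) on $J(F)$.

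The main obstacle is this off-diagonal estimate: one must check that the convolution sum gives a uniform exponential bound rather than a factor of $n$. Using a rate $\nu$ strictly below $\min(d,\lambda)$ absorbs that factor. The restriction from a neighbourhood to $J$ and compactness supply the uniform constants. A secondary point is that $J(F)$ is compact and invariant, which holds by its definition as a closure and the commuting relation $q_z(J_z)=J_{z^d}$. Alternatively, I would simply cite Jonsson~\cite{Jonsson99}, where this equivalence is proved.
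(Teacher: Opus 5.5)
The paper gives no argument of its own beyond ``standard arguments'' and a pointer to Jonsson, and your computations are exactly those standard arguments. The direction vertical expansion $\Rightarrow$ repeller is correct. At $x\in J(F)$ the lower-left entry of $(DF^n)^{-1}$ is a sum of $n$ terms. The $j$-th term is bounded by $\sup_{J(F)}|\partial_z q|$ divided by $c\,d^{\,n-j+1}\lambda^{j}$, so any $\nu<\min(d,\lambda)$ absorbs the factor $n$. In the other direction, restricting $(DF^n)^{-1}$ to the invariant vertical line field correctly gives $|\partial_w q^n_z(w)|\ge C^{-1}\mu^n$ on $J(F)$.

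The step that fails is the upgrade to a neighbourhood. There is no neighbourhood $U$ of $J(F)$ whose points either remain in $U$ or escape to infinity. In fact, the estimate $\|D_wF^n\|\ge c\lambda^n$ for \emph{all} $(z,w)\in U$ and \emph{all} $n$ holds for no map in $\mathcal H$.

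\begin{itemize}
\item Already for $F_0$, every neighbourhood of $S^1\times S^1$ contains points $(z,w)$ with $|z|=1$ and $|w|=1-\epsilon$. There $|D_wF_0^n|=d^n(1-\epsilon)^{d^n-1}\to0$.
\item In general, once the critical points of $q_z$ are off $J_z$, they lie in $\mathrm{int}\,K_z$, which is then nonempty and has points arbitrarily close to $J_z$. For $w\in\mathrm{int}\,K_z$, Cauchy estimates give $\sup_n|\partial_w q^n_z(w)|<\infty$. This is because the fiber iterates map a fixed disc around $w$ into the uniformly bounded sets $K_{z^{d^n}}$.
\item Points with $|z|>1$ cannot even be iterated indefinitely, since the $c_k$ are only defined near $\overline{\mathbb D}$.
\end{itemize}

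So the definition as literally written must be read as in Jonsson. Either the estimate is required on $J(F)$ (equivalently, by continuity, on $\bigcup_{z\in S^1}\{z\}\times J_z$), or it is required along orbit segments that stay in $U$.

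\begin{itemize}
\item Under the first reading, your vertical-restriction step is already the whole proof of this direction.
\item Under the second reading, your block argument works as it stands, with no invariance property of $U$ needed. The lower bound on $|\partial_w q|$ near $J(F)$ for the last $<N$ steps comes from $\partial_w q\neq 0$ on $J(F)$.
\end{itemize}

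Either way, drop the ``forward-invariant-up-to-escape'' neighbourhood, because it does not exist. If ``repeller'' is also meant to include local maximality of $J(F)$, that needs a short extra argument, which neither you nor the paper supplies.
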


Vertical expansion is an open condition in $\mathcal S(d)$
(and $\mathcal P(d)$)
 and defines
connected components, which are 
called
\emph{hyperbolic components}.
By \cite{AstorgBianchi1}, these components coincide with
the stability components in the sense of \cite{BBDstab, Bianchi19}
as soon as one parameter inside them is hyperbolic.

For maps belonging to a hyperbolic component, the Julia sets vary continuously with the
map and the dynamics is structurally stable: all maps in the same component are
topologically conjugate on their Julia sets.
Moreover, the conjugacies respect the skew-product structure and depend continuously on
the parameter.

In particular, the skew product
\[
F_0(z,w)=(z^d,w^d)
\]
belongs to a distinguished hyperbolic component of $\mathcal S(d)$
(and $\mathcal P(d)$).
This component plays a role analogous to the
main cardioid of the Mandelbrot set for quadratic polynomials.

\subsection{Conjugacies and regularity inside a hyperbolic component}\label{sec_conjugacies}
Let
 $\mathcal H \subset \mathcal S(d)$ be
  a hyperbolic component
  consisting of fiberwise connected maps,
   and fix a reference map
$F_0\in\mathcal H$.
For any $F\in\mathcal H$, we denote by $J(F)$ the Julia set of $F$, and recall from
Section~\ref{sec_expansion} 
that all maps in $\mathcal H$ are topologically conjugate on their
Julia sets by conjugacies respecting the skew-product structure.
In this subsection, we use 
B\"ottcher coordinates to 
study the regularity of this 
 canonical conjugacy between $J(F_0)$ and $J(F)$.

Fix
 $F\in\mathcal H$.
Since $F$ is fiberwise connected, for every $z\in S^1$ 
the basin of infinity
$W_{F,z}=\mathbb C\setminus K_{F,z}$ admits a B\"ottcher coordinate
\[
\varphi_{F,z}\colon W_{F,z}\longrightarrow 1/\Delta
\]
conjugating $q_{F,z}$ to $w\mapsto w^d$ and tangent to the identity at infinity,
see Proposition~\ref{prop_sester}.
We define a fibered map
\[
H_F\colon W_{F_0}\longrightarrow W_F,
\qquad
H_F(z,w) = \bigl(z, H_{F,z}(w)\bigr),
\]
by setting
\[
H_{F,z} \defeq \varphi_{F,z}^{-1}\circ \varphi_{F_0,z}
\quad \text{on } W_{F_0,z}.
\]
The  following lemma is an immediate consequence of
the defining properties of the B\"ottcher coordinates.

\begin{lem}\label{lem_interior_conjugacy}
The map $H_F$ is well-defined and satisfies:
\begin{enumerate}
\item $H_{F_0}=\mathrm{id}$;
\item for each $z\in S^1$,
 the map $H_{F,z}$ is conformal on $W_{F_0,z}$;
\item $H_F$ conjugates $F_0$ and $F$ on the basin of infinity:
\[
F\circ H_F = H_F\circ F_0 \quad \text{on } W_{F_0}.
\]
\end{enumerate}
\end{lem}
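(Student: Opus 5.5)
Everything here follows directly from the properties of the Böttcher coordinates in Proposition~\ref{prop_sester}. I will check the statements in order: first that $H_F$ is well defined, then items (1)--(3).

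\emph{Well-definedness.} Since both $F_0$ and $F$ are fiberwise connected, Proposition~\ref{prop_sester} applies to each of them. For every $z\in S^1$ it gives biholomorphisms $\varphi_{F_0,z}\colon W_{F_0,z}\to 1/\Delta$ and $\varphi_{F,z}\colon W_{F,z}\to 1/\Delta$. The map $\varphi_{F,z}$ is a bijection onto $1/\Delta$, which is exactly the image of $\varphi_{F_0,z}$. Hence the composition $H_{F,z}=\varphi_{F,z}^{-1}\circ\varphi_{F_0,z}$ is defined on all of $W_{F_0,z}$ and takes values in $W_{F,z}$. Therefore $H_F(z,w)=(z,H_{F,z}(w))$ maps $W_{F_0}$ into $W_F$ and preserves fibers.

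If one also wants $H_F$ to be continuous, this follows from continuity of $\varphi$ in $(z,w)$ (item (2) of Proposition~\ref{prop_sester}) together with continuity of the inverse family $(z,u)\mapsto\varphi_{F,z}^{-1}(u)$. The latter holds because each $\varphi_{F,z}$ is a biholomorphism onto the fixed target $1/\Delta$ and the family depends continuously on $z$. This step is the only point needing any care, and a standard compactness argument on $S^1$ suffices.

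\emph{Item (1).} When $F=F_0$ we get $H_{F_0,z}=\varphi_{F_0,z}^{-1}\circ\varphi_{F_0,z}=\mathrm{id}$ for every $z$, so $H_{F_0}=\mathrm{id}$.

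\emph{Item (2).} For fixed $z$, the map $H_{F,z}$ is a composition of two biholomorphisms. It is therefore a biholomorphism $W_{F_0,z}\to W_{F,z}$, and in particular conformal.

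\emph{Item (3).} I compare both sides fiberwise. For $(z,w)\in W_{F_0}$, the left side is
\[
F\circ H_F(z,w)=\bigl(z^d,\ q_{F,z}\circ\varphi_{F,z}^{-1}\circ\varphi_{F_0,z}(w)\bigr).
\]
By the commutative diagram in Proposition~\ref{prop_sester}, applied to $F$, we have $\varphi_{F,z^d}\circ q_{F,z}=(\cdot)^d\circ\varphi_{F,z}$. Rewriting this as a statement about $\varphi_{F,z}^{-1}$ gives $q_{F,z}\circ\varphi_{F,z}^{-1}=\varphi_{F,z^d}^{-1}\circ(\cdot)^d$ on $1/\Delta$. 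Substituting, the second coordinate becomes
\[
\varphi_{F,z^d}^{-1}\bigl(\varphi_{F_0,z}(w)^d\bigr).
\]
The same diagram, now applied to $F_0$, gives $\varphi_{F_0,z}(w)^d=\varphi_{F_0,z^d}(q_{F_0,z}(w))$. So the second coordinate equals
\[
\varphi_{F,z^d}^{-1}\circ\varphi_{F_0,z^d}\bigl(q_{F_0,z}(w)\bigr)=H_{F,z^d}\bigl(q_{F_0,z}(w)\bigr),
\]
which is exactly the second coordinate of $H_F\circ F_0(z,w)$. The first coordinates agree since both equal $z^d$. Hence $F\circ H_F=H_F\circ F_0$ on $W_{F_0}$.
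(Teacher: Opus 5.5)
Your proof is correct and takes the same approach as the paper, which states the lemma as an immediate consequence of the properties of the B\"ottcher coordinates in Proposition~\ref{prop_sester}; your fiberwise check of item (3), using the commutative diagram for both $F$ and $F_0$, is exactly the intended verification. The continuity discussion is not needed, since the lemma does not assert continuity of $H_F$.
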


We now consider
 the extension of $H_F$ to the Julia set.
 The basins of infinity
$\{W_{F,z}\}_{z\in S^1}$ form a family of
John domains
\cite{DH,Pommerenke},
uniformly in 
$z\in S^1$ 
(see, for instance, \cite{Sumi06}),
 and locally uniformly in 
$F\in \mathcal H$
(as the maximal expansion $\sup_{J(F)}\|DF\|$ and the minimal expansion
$\inf_{J(F)}\|DF^{-1}\|^{-1}$ depend continuously on $F$).
As a consequence, the B\"ottcher coordinates admit H\"older continuous boundary extensions
with uniform H\"older exponent and constant 
\cite{CJY94}.

\begin{lem}
\label{lem_holder_conjugacy}
There exist $\alpha\in(0,1)$ and $C>0$ such that for every $F\in\mathcal H$,
the conjugacy $H_F$ extends uniquely to a map
\[
\hat H_F\colon W_{F_0}\cup J(F_0)\longrightarrow W_F \cup J(F)
\]
which is $\alpha$-H\"older continuous with H\"older constant bounded by $C$.
Moreover, $\hat H_F$ conjugates the dynamics on the Julia sets and respects the
skew-product structure.
\end{lem}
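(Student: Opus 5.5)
We extend $H_F$ fiber by fiber, using the uniform John property of the basins $W_{F,z}$, and then glue the fibers together.

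\textbf{Step 1: fiberwise extension.} For each $z\in S^1$ we have $H_{F,z}=\varphi_{F,z}^{-1}\circ\varphi_{F_0,z}$. Since $W_{F_0,z}$ is a John domain with constants uniform in $z$, the cited result \cite{CJY94} shows that $\varphi_{F_0,z}$ is bi-Hölder up to the boundary on $W_{F_0,z}\cap\{|w|<R\}$.

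\begin{itemize}
\item The estimate $|\varphi_{F_0,z}(w)|-1\gtrsim \mathrm{dist}(w,K_{F_0,z})^{\beta}$ comes from Hölder continuity of the Green function $G_z=\log|\varphi_z|$.
\item The reverse control, Hölder continuity of $\varphi_{F,z}^{-1}$ on $\{1<|u|<R'\}$, is the John-domain statement for the inverse Riemann map.
\end{itemize}

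The constants are uniform in $z$, and locally uniform in $F\in\mathcal H$ by the continuity of the expansion constants. Composing, $H_{F,z}$ is uniformly $\alpha$-Hölder on $W_{F_0,z}\cap\{|w|<R\}$. Near infinity $H_{F,z}$ is tangent to the identity, hence Lipschitz. A uniformly Hölder map extends uniquely to the closure, which gives $\hat H_{F,z}$ on $W_{F_0,z}\cup J_z(F_0)$ with image in $\overline{W_{F,z}}=W_{F,z}\cup J_z(F)$.

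\textbf{Step 2: joint regularity in $z$.} This is the main obstacle. The paper only gives continuity of $\varphi$ in $(z,w)$ (Proposition~\ref{prop_sester}), not Hölder dependence on $z$, so Hölder continuity of $\hat H_F$ as a map of two variables needs an argument. I would obtain it dynamically, using the conjugacy relation $F\circ H_F=H_F\circ F_0$ from Lemma~\ref{lem_interior_conjugacy}.

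\begin{itemize}
\item By vertical expansion (Proposition~\ref{prop_vert_exp_hyp}), points of $J(F)$ are determined by their orbit itineraries along inverse branches, which contract at rate $\lambda^{-1}$. This is a standard shadowing/holonomy argument.
\item Take $(z,w)$ and $(z',w')$ at distance $\epsilon$. Iterate forward about $n\sim c\log(1/\epsilon)$ times; the images remain in a bounded region of size $O(1)$.
\item Pull back along the contracting inverse branches of $F$ and $F_0$. The horizontal map $z\mapsto z^d$ is expanding, so $|z^{d^n}-z'^{d^n}|$ stays controlled for $n$ of this size.
\item This yields a uniform Hölder estimate for $\hat H_F$ in all variables.
\end{itemize}

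Equivalently, one can invoke the standard fact that the structural-stability conjugacy of a hyperbolic repeller is Hölder, with constants locally uniform in $F$. Uniqueness of the extension to $J(F_0)$ follows from density of $W_{F_0}$ in $W_{F_0}\cup J(F_0)$, since each $J_z(F_0)=\partial W_{F_0,z}$, and the continuous extension is unique on that closure. I expect that handling points of $J(F_0)$ that lie in the closure of $\bigcup_z J_z$ but not in their own fiber's Julia set needs an additional argument, and I would address it at this step.

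\textbf{Step 3: conjugacy and fiber structure.} Both properties pass to the limit by continuity. $\hat H_F$ preserves the first coordinate because $H_F$ does. Taking limits in $F\circ H_F=H_F\circ F_0$ gives $F\circ\hat H_F=\hat H_F\circ F_0$ on $J(F_0)$. The extension maps $J(F_0)$ into $J(F)$, since points with bounded orbit are sent to the boundary of the basin, where the Green function vanishes. Finally, uniformity of $\alpha$ and $C$ over $\mathcal H$ is to be understood locally, on compact subsets of $\mathcal H$, which is what the continuity of the expansion constants provides.
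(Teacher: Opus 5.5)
\textbf{Step 1 has a false step.} Your fiberwise argument is the paper's: the basins $W_{F,z}$ are John domains, uniformly in $z$ and locally uniformly in $F$, and \cite{CJY94} then gives H\"older boundary behaviour. But the composition is wrong as stated. The John property gives H\"older continuity up to $\{|u|=1\}$ only of the \emph{inverse} B\"ottcher map $\varphi_{F,z}^{-1}$. The map $\varphi_{F_0,z}$ itself extends continuously to $J_z(F_0)$ only when $J_z(F_0)$ is a Jordan curve. Otherwise several external rays land at a common point, and points of $W_{F_0,z}$ near it along different rays have far-apart images.

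Your first bullet does not repair this. H\"older continuity of $G_z$ gives the \emph{upper} bound $G_z(w)\lesssim\mathrm{dist}(w,K_{F_0,z})^{\beta}$. The lower bound you write follows instead from H\"older continuity of $\varphi_{F_0,z}^{-1}$. Either way, a bound on $|\varphi_{F_0,z}|$ controls only the radial B\"ottcher coordinate, never $\arg\varphi_{F_0,z}$. So ``both factors are H\"older, compose'' fails for a general reference map $F_0\in\mathcal H$.

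It is harmless in the case the paper actually uses, $F_0=(z^d,w^d)$. There $\varphi_{F_0,z}=\mathrm{id}$ and $H_{F,z}=\varphi_{F,z}^{-1}$, so only the John direction is needed, and Step 1 becomes exactly the paper's one-line argument. The paper's phrase about H\"older boundary extensions of the B\"ottcher coordinates carries the same implicit restriction. When the fibers $J_z(F_0)$ are uniform quasicircles, your bi-H\"older claim is true, but by quasiconformal extension, not by \cite{CJY94}. For a general component you need another reason for $\varphi_{F,z}^{-1}\circ\varphi_{F_0,z}$ to extend. One option is the $\lambda$-lemma applied to the holomorphic motion $F\mapsto H_{F,z}$ of $W_{F_0,z}$, which gives quasiconformal, hence H\"older, extensions to the closure with locally uniform constants.

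\textbf{Steps 2 and 3 compared with the paper.} Step 2 goes beyond the paper. The paper only asserts fiberwise bounds uniform in $z$ and never treats H\"older dependence on $z$. The statement needs it, and so does the later use of $\dot\phi_0$ as a H\"older potential on $S^1\times S^1$. Your pull-back argument is the standard way to supply it, with one precision: choose $n$ so that the two $F_0^n$-images are $\eta$-close. Here $\eta$ comes from the a priori uniform continuity of $\hat H_F$, and ensures a single inverse branch of $F^n$ is defined and contracting on a ball containing both $\hat H_F$-images. Your ``equivalent'' appeal to H\"older continuity of the structural-stability conjugacy covers only $J(F_0)$, not $W_{F_0}$.

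Step 3, as written, gives only a semiconjugacy. Injectivity on $J(F_0)$ is where the paper identifies $\hat H_F$ with the structural-stability conjugacy; this follows from expansivity of $F$ on $J(F)$ together with continuity in $F$.

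The issue you leave open at the end of Step 2 does not arise. If $(z,w)\in J(F_0)$, then $w\in K_{F_0,z}$ by continuity of $G$. If $w$ were interior to $K_{F_0,z}$, the fiber iterates would be uniformly bounded near $w$. Cauchy estimates would then contradict $\|D_wF_0^n\|\ge c\lambda^n$ near $J(F_0)$. Hence the fiber of $J(F_0)$ over $z$ is exactly $\{z\}\times J_z(F_0)$. Finally, your reading of the uniformity of $\alpha$ and $C$ as local in $F$ is the correct one.
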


Since both the extension given by Lemma~\ref{lem_holder_conjugacy} and structural stability provide
conjugacies on $J(F_0)$, these maps coincide on $J(F_0)$ (by uniqueness of the conjugacy inside a
hyperbolic component). In particular, the conjugacies among Julia sets are H\"older continuous,
uniformly in $z$ and locally uniformly in $F$.

We now strengthen
 the 
(local) 
compactness
of the maps $\{\hat H_F\}_{F}$ in the H\"older norm, thanks to the fact that,
for every $(z,w)\in J(F_0)$
 the conjugacy
map $F\mapsto H_F (z,w)$
is actually \emph{holomorphic} in $F$.
Let $G_t, t\in (-\varepsilon,\varepsilon)$ be a parametrization of a $C^2$
path in
$\mathcal H$
 with
$G_0=F_0$.
Recall that, given a path
$(\psi_t)_{ t\in(-\varepsilon,\varepsilon)}$ 
in $C^\alpha\bigl(J(F_0)\bigr)$,
we can define 
\begin{equation}\label{eq:def-dot-ddot-phi}
\dot\psi_t\defeq\left.\frac{d}{ds}\right|_{s=t}\psi_s
\qquad
\mbox{ and }
\qquad
\ddot\psi_t\defeq\left.\frac{d^2}{ds^2}\right|_{s=t}\psi_s.
\end{equation}
We say that
$(\psi_t)_{ t\in(-\varepsilon,\varepsilon)}$ is $C^1$
 (resp.\ $C^2$) if $\dot\psi_t$ (resp. $\ddot\psi_t$)
 defines a continuous path in 
 $C^\alpha\bigl(J(F_0)\bigr)$.

\begin{prop}
\label{prop_param_regularity}
The map
$t\mapsto \hat H_{G_t}$
is $C^2$ 
as a map from $(-\varepsilon,\varepsilon)$ into the H\"older space
$C^\alpha\bigl(W_{F_0} \cup J(F_0)\bigr)$.
\end{prop}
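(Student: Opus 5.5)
We establish the regularity first on the basin $W_{F_0}$, using the holomorphic dependence of the B\"ottcher coordinates on the parameter, and then transfer it to the Julia set. The transfer combines the uniform H\"older bounds of Lemma~\ref{lem_holder_conjugacy} with the Cauchy estimates available for holomorphic families.

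\emph{Step 1: complexify the parameter.} The path $G_t$ is only $C^2$ in a real parameter. However, locally near any $t_0$ the family $F\mapsto \hat H_F$ depends holomorphically on $F$ in the finite (or Banach) dimensional complex manifold $\mathcal H$. We therefore prove that $F\mapsto \hat H_F$ is a \emph{holomorphic} map from a neighbourhood $\mathcal U$ of each $F_1\in\mathcal H$ into $C^\alpha(W_{F_0}\cup J(F_0))$. Composing with the $C^2$ path $t\mapsto G_t$ then gives a $C^2$ map by the chain rule, since holomorphic maps between Banach spaces are $C^\infty$.

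\emph{Step 2: pointwise holomorphy.} For fixed $(z,w)\in W_{F_0}$ we have $H_{F,z}(w)=\varphi_{F,z}^{-1}(\varphi_{F_0,z}(w))$. Here $\varphi_{F,z}$ is given by the uniformly convergent product / limit $\varphi_{F,z}(w)=\lim_n (q^n_{F,z}(w))^{1/d^n}$, with a branch fixed by tangency to the identity at infinity. Each term is holomorphic in $F$, and convergence is locally uniform. Hence $F\mapsto \varphi_{F,z}$ is holomorphic, and so is its inverse, locally uniformly on $\{G>\eta\}$. For $(z,w)\in J(F_0)$, the point $\hat H_F(z,w)$ is the holomorphic motion of $J(F_0)$ given by structural stability: it is the limit of $H_F(z,w_n)$ with $w_n\to w$ in the basin. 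A locally uniform limit of holomorphic functions of $F$ is holomorphic, and the limit is uniform because the family is uniformly H\"older. So $F\mapsto \hat H_F(z,w)$ is holomorphic for every point.

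\emph{Step 3: upgrade to the H\"older norm (main obstacle).} By Lemma~\ref{lem_holder_conjugacy}, $\|\hat H_F\|_{C^\alpha}\le C$ for all $F$ in a polydisc $\mathcal U$ of radius $r$ around $F_1$. We apply Cauchy's formula in the parameter, pointwise:
\[
\partial_F^k \hat H_F(x)=\frac{k!}{2\pi i}\oint \frac{\hat H_{F+\zeta v}(x)}{\zeta^{k+1}}\,d\zeta .
\]
Taking differences in $x$ under the integral shows that the $k$-th derivatives lie in $C^\alpha$ with norm at most $k!\,C\,r^{-k}$. The same argument controls the Taylor remainders. The difference quotients then converge in $C^\alpha$, not just pointwise, so $F\mapsto\hat H_F$ is holomorphic (in particular $C^2$) into $C^\alpha$. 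The delicate point is that the uniform H\"older constant from Lemma~\ref{lem_holder_conjugacy} must hold on a full complex neighbourhood. This is where local uniformity of the John constants in $F$ is used. If needed, one can shrink the exponent $\alpha$ slightly and interpolate between uniform $C^{\alpha'}$ bounds and uniform ($C^0$) convergence to obtain convergence in $C^\alpha$.
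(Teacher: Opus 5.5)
Your proposal is correct and follows the paper's route: you prove pointwise holomorphy of $F\mapsto \hat H_F(z,w)$ as a locally uniform limit of holomorphic maps, then use Cauchy estimates in the parameter together with the uniform H\"older bound of Lemma~\ref{lem_holder_conjugacy} to get smoothness in $C^\alpha$. You make explicit what the paper leaves to the reader: Cauchy's formula applied to H\"older differences bounds both the derivatives and the Taylor remainders, which already gives convergence in $C^\alpha$ itself (so the interpolation fallback is unnecessary), and the chain rule handles the real $C^2$ path $G_t$.
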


\begin{proof}
For every given $(z,w)$,
the map
$F\mapsto H_F(z,w)$ is
 constructed as a uniform limit
of holomorphic functions.
 Hence, it
depends holomorphically on  $F$.
It is 
standard
that the uniform
 control 
given by Lemma \ref{lem_holder_conjugacy} 
allows one to apply Cauchy estimates to obtain
uniform bounds on derivatives, yielding smoothness
 in the H\"older norm, see, for instance, \cite{SU10}
 for similar arguments.
\end{proof}

\section{Proof of Theorem \ref{thm_main_sk}}
\label{s:proof-thm}

\subsection{From $\ddot \delta_0$ to the variance of $\dot \phi_0$}\label{sec_derivative}

We denote by
 $F_0$ the map 
$(z,w)\mapsto (z^d,w^d)$
and by $m_0 \defeq \Leb_{1}\times \Leb_{1}$ its unique measure of maximal entropy.
 Let $\mathcal H\subset\mathcal S(d)$ be the
 hyperbolic component of $\mathcal S(d)$ containing $F_0$.
Consider a $C^2$ curve $(\varepsilon, \varepsilon)\ni t\mapsto F_t\in\mathcal H$ with $F_{t=0}=F_0$.
By Section~\ref{sec_conjugacies}, for each $t$
 there exists a conjugacy
\[
\hat H_t\colon J(F_0)\longrightarrow J(F_t)
\]
which is H\"older continuous in $(z,w)$
 and depends $C^2$ on $t$ with respect to the 
  H\"older topology of $C^\alpha(J(F_0))$ for some $\alpha\in (0,1)$.

\medskip

Consider the family of geometric 
potentials on $J(F_0)=S^1 \times S^1$ 
given by
\[
\phi_t \defeq -\log \bigl|\Jac F_t \circ \hat H_t\bigr|.
\]
By Proposition~\ref{prop_param_regularity} and the smooth dependence of
$(z,w)\mapsto \log|\Jac F_t(z,w)|$ on $t$, the path $\{\phi_t\}_t$
is $C^2$ in $C^\alpha(J(F_0))$ for some $\alpha\in (0,1)$. 
In particular, $\dot \phi_0$ and $\ddot \phi_0$
 are well-defined 
 by \eqref{eq:def-dot-ddot-phi} 
 and H\"older continuous on $J(F_0)= S^1 \times S^1$.
 
 \begin{lem}\label{l:phi}
 We have $\int \dot \phi_0\, dm_0=0$ and  $\int \ddot \phi_0\, dm_0=0$
 \end{lem}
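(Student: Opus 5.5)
The plan is to show that $t\mapsto\int\phi_t\,dm_0$ is constant, equal to $-2\log d$, for all $t$ near $0$. Since $t\mapsto\phi_t$ is $C^2$ as a path in $C^\alpha(J(F_0))$, and $m_0$ is a probability measure, differentiation commutes with integration. This gives $\int\dot\phi_0\,dm_0=\frac{d}{dt}\big|_{0}\int\phi_t\,dm_0=0$, and likewise for $\ddot\phi_0$.

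\emph{Step 1: change of measure.} I will write $\int\phi_t\,dm_0=-\int\log|\Jac F_t|\,d\bigl((\hat H_t)_*m_0\bigr)$. Since $\hat H_t$ is a topological conjugacy between $F_0|_{J(F_0)}$ and $F_t|_{J(F_t)}$, it preserves measure-theoretic entropy. Therefore $(\hat H_t)_*m_0$ is a measure of maximal entropy $2\log d$ for $F_t$ on $J(F_t)$. By uniqueness of the measure of maximal entropy (the equilibrium measure $\mu_t$ of the polynomial-like map), $(\hat H_t)_*m_0=\mu_t$. Because $\hat H_t$ respects the fibration and the base map is always $z\mapsto z^d$, $\mu_t$ disintegrates as $\mu_t=\int_{S^1}\mu_{t,z}\,d\Leb_1(z)$. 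Here $\mu_{t,z}=\frac{1}{2\pi}\Delta G_{t,z}$ is the fiberwise harmonic measure of $K_{t,z}$; I would quote Jonsson for this standard description of the equilibrium measure of a skew product.

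\emph{Step 2: computing the Lyapunov sum.} We have $\Jac F_t(z,w)=dz^{d-1}\,q_{t,z}'(w)$, so $\log|\Jac F_t|=\log d+(d-1)\log|z|+\log|q_{t,z}'(w)|$, and $\log|z|=0$ on $S^1$. Writing $q_{t,z}'(w)=d\prod_{j=1}^{d-1}(w-\gamma_j(t,z))$ with critical points $\gamma_j$, we get
\[
\int\log|q'_{t,z}|\,d\mu_{t,z}=\log d+\sum_j\int\log|w-\gamma_j|\,d\mu_{t,z}(w)=\log d+\sum_j G_{t,z}(\gamma_j(t,z)).
\]
The last equality uses that $q_{t,z}$ is monic, so $G_{t,z}(w)=\int\log|w-\zeta|\,d\mu_{t,z}(\zeta)$ (logarithmic capacity $1$). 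All maps in $\mathcal H$ are fiberwise connected, so every critical point has bounded orbit and $G_{t,z}(\gamma_j)=0$. Integrating over $z$ then gives $\int\log|\Jac F_t|\,d\mu_t=2\log d$, independent of $t$.

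\emph{Main obstacle.} I expect the delicate point to be the identification $(\hat H_t)_*m_0=\mu_t$ together with the product-like disintegration. I would justify it through uniqueness of the maximal entropy measure on the hyperbolic repeller, which is available from \cite{DS03}/\cite{BHMane}, and through the fibered structure of $\hat H_t$ from Lemma~\ref{lem_holder_conjugacy}.

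Alternatively, one could avoid entropy arguments and use $m_0$ directly. Since $\hat H_{t,z}$ is the boundary value of $\varphi_{t,z}^{-1}\circ\varphi_{0,z}$, it maps Lebesgue measure on the unit circle to the harmonic measure of $K_{t,z}$ seen from $\infty$, which is $\mu_{t,z}$. This route also gives the disintegration.
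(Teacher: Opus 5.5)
Your proposal is correct and takes the paper's route: the paper also reduces the lemma to the constancy of $t\mapsto -\int\phi_t\,dm_0$, the sum of the Lyapunov exponents of $F_t$, but it simply cites \cite[Theorem 5.3]{Jonsson99} for the value $2\log d$, whereas you reprove that formula through the critical-point and Green-function computation. Your direct B\"ottcher argument is the cleanest way to get the disintegration: since $\varphi_{0,z}=\mathrm{id}$, the measure $(\hat H_{t,z})_*\Leb_1$ is the harmonic measure of $W_{t,z}$ at $\infty$, and because the computation is fiberwise it applies verbatim to holomorphic coefficients $c_k(z)$, whereas \cite{Jonsson99} treats polynomial skew products.
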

 
 \begin{proof}
 Observe that the 
 function $t\mapsto L(t)\defeq -\int \phi_t dm_0$ is the sum of the Lyapunov exponents of 
 $F_t$. The assertion follows from the fact that
  this function is constant
  (and equal to $2\log d$)
   in a neighbourhood of $F_0$, see
   \cite[Theorem 5.3]{Jonsson99}.
 \end{proof}
\medskip

Let $\delta_t$ denote the unique positive real number such that
\[
P(\delta_t \phi_t)=0,
\]
where $P(\cdot)$ denotes the topological pressure.
By real-analyticity of the pressure for H\"older potentials and the implicit function theorem,
the map $t\mapsto\delta_t$ is real-analytic on a neighbourhood of $0$
(see \cite{Ruelle82}).

\begin{lem}\label{l:delta}
We have $\delta_0=1$ and $\dot \delta_0=0$.
\end{lem}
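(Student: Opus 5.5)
First, $\delta_0=1$. At $t=0$ we have $\hat H_0=\mathrm{id}$ and $\Jac F_0(z,w)=d^2 z^{d-1}w^{d-1}$. On $J(F_0)=S^1\times S^1$ this gives $|\Jac F_0|=d^2$, so $\phi_0\equiv -2\log d$ is constant. For a constant potential $c$ we have $P(sc)=h_{top}(F_0)+sc$. The map $F_0$ restricted to $S^1\times S^1$ is the torus endomorphism of topological degree $d^2$, so $h_{top}=\log d^2=2\log d$. Hence $P(s\phi_0)=2\log d-2s\log d$, and its unique zero is $s=1$, i.e. $\delta_0=1$.

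Next, $\dot\delta_0=0$. Differentiate the identity $P(\delta_t\phi_t)=0$ at $t=0$, using the standard formula for the derivative of pressure along a $C^1$ path of H\"older potentials: $\frac{d}{dt}P(\psi_t)=\int\dot\psi_t\,d\mu_{\psi_t}$, where $\mu_{\psi_t}$ is the equilibrium state. This is where we invoke real-analyticity of the pressure on $C^\alpha(J(F_0))$ together with the $C^2$ regularity of $t\mapsto\phi_t$ from Proposition~\ref{prop_param_regularity}. Since $\delta_0\phi_0$ is constant, its equilibrium state is the measure of maximal entropy $m_0$. We obtain
\[
0=\int\bigl(\dot\delta_0\phi_0+\delta_0\dot\phi_0\bigr)\,dm_0=-2\log d\,\dot\delta_0+\int\dot\phi_0\,dm_0 .
\]
Lemma~\ref{l:phi} gives $\int\dot\phi_0\,dm_0=0$, and therefore $\dot\delta_0=0$.

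The only step that needs care is the identification of the equilibrium state of the constant potential with $m_0=\Leb_1\times\Leb_1$. This holds because the torus endomorphism $(z,w)\mapsto(z^d,w^d)$ is expanding, so it has a unique measure of maximal entropy, namely Haar measure. The pressure-derivative formula applies because $F_0$ is a uniformly expanding (hyperbolic) repeller on $J(F_0)$, by Proposition~\ref{prop_vert_exp_hyp}.
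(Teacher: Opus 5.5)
Your proposal is correct and follows the paper's proof essentially verbatim. Constancy of $\phi_0\equiv -2\log d$ gives $P(s\phi_0)=2\log d\,(1-s)$, and differentiating $P(\delta_t\phi_t)=0$ at $t=0$ against the equilibrium state $m_0$, combined with Lemma~\ref{l:phi}, gives $\dot\delta_0=0$. Your added justifications (the explicit Jacobian, $h_{top}=2\log d$, and uniqueness of the equilibrium state of a constant potential) are details the paper leaves implicit.
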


\begin{proof}
We have $\phi_0 \equiv -2\log d$ on $J(F_0) = S^1\times S^1$.
Therefore, we have
 $P(s\phi_0)=\log(d^2)-s\log(d^2)$ for every $s\in \mathbb R$.
  Hence, the unique solution
  to $P(s\phi_0)=0$ 
  is $s=1$, which proves the first assertion.

\medskip

To prove the
 second one, differentiate the identity $P(\delta_t\phi_t)=0$ at $t=0$.
By standard arguments
in thermodynamic formalism 
(see
e.g., \cite{PU}), we have
\[
0=\left.\frac{d}{dt}\right|_{t=0}P(\delta_t\phi_t)
=\int (\dot\delta_0\,\phi_0+\delta_0\dot\phi_0)\,dm_0,
\]
since $m_0$ is the equilibrium state of $\delta_0\phi_0=\phi_0\equiv -2\log d$.
Since $\phi_0$ is constant and $\delta_0=1$, this gives
\[
0=\dot\delta_0\int\phi_0\,dm_0+\int\dot\phi_0\,dm_0.
\]
Since $\int\dot\phi_0\,dm_0=0$
by Lemma \ref{l:phi},
the assertion follows.
\end{proof}

  Recall that,
given 
a continuous function $\psi\colon J(F_0)\to\mathbb R$ 
with $\int \psi dm_0=0$,
the (asymptotic)
\emph{variance} of $\psi$ with respect to $m_0$
 is defined by
\begin{align*}\label{eq_def_var}
{\rm Var}(\psi,m_0)
&\defeq
\lim_{n\to\infty}\frac1n
\int_{J(F_0)}\left(\sum_{j=0}^{n-1}\psi\circ F_0^j\right)^2\,dm_0
=
\lim_{n\to\infty}\frac1n\|S_n\psi\|_{L^2(m_0)}^2
\;\in[0,+\infty].
\end{align*}

\begin{prop}\label{thm_delta_second_derivative}
We have
\[
\ddot \delta_0 
=
\frac{
{\rm Var}\bigl(\dot\phi_0,m_0\bigr)}{2\log d}.
\]
\end{prop}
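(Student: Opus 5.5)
We differentiate the identity $P(\delta_t\phi_t)=0$ twice at $t=0$, using the standard formulas for the first two derivatives of pressure along a $C^2$ path of H\"older potentials. Write $\Phi_t\defeq\delta_t\phi_t$. This is a $C^2$ path in $C^\alpha(J(F_0))$, by Proposition \ref{prop_param_regularity} and the analyticity of $t\mapsto\delta_t$. For such a path and the equilibrium state $\mu_t$ of $\Phi_t$ (see \cite{PU}), we have
\[
\frac{d^2}{dt^2}P(\Phi_t)\Big|_{t=0}
=\int\ddot\Phi_0\,dm_0+{\rm Var}\Bigl(\dot\Phi_0-\textstyle\int\dot\Phi_0\,dm_0,\;m_0\Bigr),
\]
since $\mu_0=m_0$. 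This is the usual statement that the Hessian of pressure is the asymptotic variance. The setting $F_0\colon S^1\times S^1\to S^1\times S^1$, $(z,w)\mapsto(z^d,w^d)$ is expanding, so the transfer-operator machinery applies.

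Next we compute the derivatives of $\Phi_t$. We have $\dot\Phi_0=\dot\delta_0\phi_0+\delta_0\dot\phi_0=\dot\phi_0$ by Lemma \ref{l:delta}, and this function has mean zero by Lemma \ref{l:phi}. For the second derivative,
\[
\ddot\Phi_0=\ddot\delta_0\phi_0+2\dot\delta_0\dot\phi_0+\delta_0\ddot\phi_0=-2\log d\,\ddot\delta_0+\ddot\phi_0 .
\]
Integrating against $m_0$ and using $\int\ddot\phi_0\,dm_0=0$ from Lemma \ref{l:phi} gives $\int\ddot\Phi_0\,dm_0=-2\log d\,\ddot\delta_0$. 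Since $P(\Phi_t)\equiv0$, we obtain
\[
0=-2\log d\,\ddot\delta_0+{\rm Var}(\dot\phi_0,m_0),
\]
which is the claim.

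The main obstacle is justifying the second-derivative formula for pressure in this setting. We need analyticity (or at least $C^2$ regularity) of $P$ on $C^\alpha(J(F_0))$ together with the chain rule along the $C^2$ curve $t\mapsto\Phi_t$. We also need to identify $D^2P_{\phi_0}[\psi,\psi]$ with the asymptotic variance of $\psi-\int\psi\,dm_0$ as defined above, i.e.\ the limit of $\frac1n\|S_n\psi\|^2_{L^2}$.

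For the regularity, we first use that $P$ is real-analytic on $C^\alpha$ for the expanding map $F_0$, via the leading eigenvalue of the transfer operator $\mathcal L_\phi$, which has a spectral gap. The chain rule then gives
\[
\frac{d^2}{dt^2}P(\Phi_t)=D^2P_{\Phi_t}[\dot\Phi_t,\dot\Phi_t]+DP_{\Phi_t}[\ddot\Phi_t].
\]
For the identification, we use that $DP_\phi[\psi]=\int\psi\,d\mu_\phi$ and that $D^2P_\phi[\psi,\psi]=\sigma^2_{\mu_\phi}(\psi-\int\psi\,d\mu_\phi)$. The latter follows from second-order perturbation of the leading eigenvalue of $\mathcal L_{\phi+s\psi}$. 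Since $\phi_0$ is constant, the normalized operator is simply $\frac1{d^2}\sum_{F_0y=x}$, so the Green--Kubo sum converges by exponential decay of correlations for H\"older observables on the torus. This identifies the variance with the limit in the definition.
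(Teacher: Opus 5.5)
Your proposal is correct and takes the same route as the paper. You differentiate $P(\delta_t\phi_t)\equiv 0$ twice using the formula expressing the second derivative of pressure as the variance of $\dot\Phi_0$ plus $\int\ddot\Phi_0\,dm_0$, which the paper cites from \cite[Theorem~2.2]{McMullen08}, and then simplify with Lemmas \ref{l:phi} and \ref{l:delta} and $\phi_0\equiv-2\log d$; your added spectral-gap sketch justifying that pressure formula is something the paper takes from the literature.
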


\begin{proof}
Set $\psi_t\defeq\delta_t\phi_t$.
By the regularity established above, $\{\psi_t\}$ is a $C^2$ path 
in
$C^\alpha(J(F_0))$.
As $P(\psi_t)\equiv 0$, 
it follows from 
\cite[Theorem~2.2]{McMullen08}
(see also \cite{PP90}) that
\[
0 = 
{\rm Var}
(\dot \psi_t, m_0) 
+\int \ddot \psi_0\,dm_0.
\]
A direct development using $\dot\psi_0 = \delta_0 \dot \phi_0 + \dot \delta_0 \phi_0$
and $\ddot \psi_0 =  \delta_0 \ddot \phi_0 + 2\dot \delta_0 \dot \phi_0  + \ddot \delta_0 \phi_0$
gives
\[
\ddot\delta_0
=
\frac{
{\rm Var}\bigl(\delta_0\dot\phi_0+\phi_0\dot\delta_0,m_0\bigr)
+\delta_0\int \ddot\phi_0\,dm_0
+2\int \dot\delta_0\dot\phi_0\,dm_0
}{
-\int\phi_0\,dm_0
}.
\]
This identity reduces to the one in the statement thanks to Lemmas
 \ref{l:phi} and  \ref{l:delta} and the fact that $\phi_0 \equiv -2\log d$ on $S^1\times S^1$.
\end{proof}

\subsection{Variance of $\dot \phi_0$ as an asymptotic energy of $\partial_w v$}\label{sec_CM2}

In order to
 better describe 
$\dot\phi_0$, we consider the infinitesimal
deformation of the conjugacies on the basin of infinity.
Let
\[
\widetilde v=\left.\frac{d}{dt}\right|_{t=0}\hat H_t
\]
denote the derivative of the conjugation
 given by
the B\"ottcher coordinates, see Section \ref{sec_conjugacies}.
Since the conjugacies are fibered, $\widetilde v$ is vertical and can be written as
\begin{equation}\label{eq:v}
\widetilde v(z,w)=v(z,w)\,\frac{\partial}{\partial w}
\end{equation}
for every $z\in S^1$,
where $v(z,\cdot)$ is holomorphic on the basin of infinity $W_{F_0,z}=1/\Delta$. 
Moreover, $\widetilde v$ is smooth in $w$
on $W_{F_0}$.

\begin{lem}\label{lem_coboundary_identity}
We have
\[
\dot\phi_0(z,w)
=
-\Re\!\left(\frac{\partial v}{\partial w}(z,w)\right)
+
\Re\!\left(\frac{\partial v}{\partial w}\bigl(F_0(z,w)\bigr)\right).
\]
\end{lem}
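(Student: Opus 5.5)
The plan is to differentiate the conjugacy relation $F_t\circ \hat H_t=\hat H_t\circ F_0$ on the basin of infinity (Lemma~\ref{lem_interior_conjugacy}). There everything is holomorphic in $w$ and $C^2$ in $t$ (Proposition~\ref{prop_param_regularity}). The resulting identity is then carried to $J(F_0)=S^1\times S^1$ by continuity.

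\emph{Step 1: splitting the Jacobian.} Write $F_t(z,w)=(z^d,q_t(z,w))$. Since $F_t$ is a skew product, $\Jac F_t(z,w)=d z^{d-1}\,\partial_w q_t(z,w)$. Since $\hat H_t$ is fibered, $\hat H_t(z,w)=(z,H_{t,z}(w))$, so the horizontal factor $d z^{d-1}$ evaluated at $\hat H_t(z,w)$ does not depend on $t$. Hence
\[
\dot\phi_0=-\left.\frac{d}{dt}\right|_{t=0}\log\bigl|\partial_w q_t\circ\hat H_t\bigr|.
\]

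\emph{Step 2: differentiating the conjugacy in $w$.} The second coordinate of the conjugacy relation reads $q_t(z,H_{t,z}(w))=H_{t,z^d}(w^d)$. Differentiating in $w$ gives
\[
\partial_w q_t\bigl(\hat H_t(z,w)\bigr)\,\partial_w H_{t,z}(w)=\partial_w H_{t,z^d}(w^d)\, d w^{d-1}.
\]
Taking $\log|\cdot|$ and then $d/dt$ at $t=0$, where $H_0=\mathrm{id}$ and $\partial_w H_0\equiv1$, uses
\[
\left.\frac{d}{dt}\right|_{t=0}\log|\partial_w H_{t}|=\Re\bigl(\partial_w\dot H_0\bigr).
\]
The term $\log|d w^{d-1}|$ is independent of $t$. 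We obtain
\[
\left.\frac{d}{dt}\right|_{t=0}\log\bigl|\partial_w q_t\circ\hat H_t\bigr|=\Re\bigl(\partial_w\dot H_0\circ F_0\bigr)-\Re\bigl(\partial_w\dot H_0\bigr).
\]

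\emph{Step 3: the sign.} This is the step I expect to be the real issue. If $v$ is literally the derivative $\dot H_0$ of the conjugacy $F_0\to F_t$, Step 2 gives $\dot\phi_0=\Re(\partial_w v)-\Re(\partial_w v\circ F_0)$, which is the \emph{opposite} of the stated sign. The stated formula is obtained exactly when $\widetilde v$ is taken to be the infinitesimal generator in the convention of the inverse conjugacy $\hat H_t^{-1}\colon J(F_t)\to J(F_0)$. Since $\hat H_0=\mathrm{id}$, we have $\frac{d}{dt}|_{t=0}\hat H_t^{-1}=-\frac{d}{dt}|_{t=0}\hat H_t$ pointwise, so this convention replaces $v$ by $-v$ and yields precisely the identity in the statement.

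To establish the statement as written, I would therefore fix this orientation of $v$ in \eqref{eq:v}. The alternative is a minus sign in $\phi_t$ or in the direction of $\hat H_t$, whichever the subsequent sections actually use. In either case I would verify the sign once on the explicit model $q_t=w^d+t c_d(z)$, where $v$ can be computed from the Böttcher coordinate at first order. I would not treat the discrepancy as harmless. The variance in Proposition~\ref{thm_delta_second_derivative} is indeed insensitive to the sign, but the coboundary $g$ used later must carry the right one.

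\emph{Step 4: passing to the Julia set.} The identity of Step 2 holds pointwise on $W_{F_0}$. Lemma~\ref{lem_holder_conjugacy} and Proposition~\ref{prop_param_regularity} give $C^2$ dependence of $t\mapsto\hat H_t$ in $C^\alpha(W_{F_0}\cup J(F_0))$. By Cauchy estimates on fibers, which are annuli near $|w|=1$, this controls $\partial_w v$ up to the boundary, so both sides extend continuously to $S^1\times S^1$. The identity then holds on $J(F_0)$, and it is understood there as a limit of the values of $\partial_w v$ taken on the basin of infinity, matching the ``virtual coboundary'' description in the introduction.
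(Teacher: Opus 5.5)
Your Steps 1--2 are the paper's own argument: differentiate $F_t\circ\hat H_t=\hat H_t\circ F_0$, take $\log|\Jac|$, use $\hat H_0=\mathrm{id}$. You do it more carefully, fiber by fiber, and your sign check in Step 3 is correct. Take $v=\frac{d}{dt}\big|_{t=0}\hat H_t$ for the conjugacy from $F_0$ to $F_t$. This is the orientation the paper actually uses, since the functional equation $v_{z^d}(w^d)=dw^{d-1}v_z(w)+\sum_k c_k(z)w^{d-k}$ of Lemma~\ref{lem_expansion_v} is derived from it. With this $v$, the paper's display $\Jac F_t(H_t)=(\Jac H_t)^{-1}(\Jac H_t\circ F_0)\Jac F_0$ differentiates to $\dot\phi_0=\Re(\partial_w v)-\Re(\partial_w v\circ F_0)$, which is the negative of the stated formula. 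You can confirm this independently. Differentiating that functional equation in $w$ gives $(\partial_w v)\circ F_0-\partial_w v=\frac{(d-1)v}{w}+\frac1d\sum_k(d-k)c_kw^{-k}$. Differentiating $-\log|\Jac F_t\circ\hat H_t|$ directly gives $\dot\phi_0=-\Re\bigl(\frac{(d-1)v}{w}+\frac1d\sum_k(d-k)c_kw^{-k}\bigr)$. Rather than re-orienting $v$ (which would also flip \eqref{eq_v_outside}), simply record the identity with this sign. Nothing downstream depends on it, since Proposition~\ref{thm_main_2} only sees $|g|^2$.

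The genuine gap is Step 4. Cauchy estimates from a $C^\alpha$ bound on $v$ only give $|\partial_w v(z,w)|\lesssim(|w|-1)^{\alpha-1}$. They do not give a continuous extension of $\partial_w v$ to $S^1\times S^1$, and no such extension exists. For $F_t(z,w)=(z^d,w^d+t)$, \eqref{eq_v_outside} gives $\partial_w v=\sum_{n\ge0}(1-d^{-n-1})\,w^{-d^{n+1}}$, a gap series with non-decaying coefficients. Hence $\int_{|w|=r}|\partial_w v|^2\,d\Leb_r\sim|\log(r-1)|/\log d\to\infty$, and $g=\Re(\partial_w v)$ is unbounded near $S^1\times S^1$. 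Nor could it be otherwise. A continuous $g$ on $W_{F_0}\cup J(F_0)$ would make $S_n\dot\phi_0=\pm(g-g\circ F_0^n)$ uniformly bounded on $J(F_0)$. That would give ${\rm Var}(\dot\phi_0,m_0)=0$ and $\ddot\delta_0=0$, killing the second-order term of Theorem~\ref{thm_main_sk} for every family. This is exactly what ``virtual coboundary'' means, and the boundary values of $\partial_w v$ you invoke at the end of Step 4 do not exist.

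The correct formulation is an identity on $W_{F_0}$, with $\dot\phi_0$ there meaning $-\partial_t|_{t=0}\log|\Jac F_t\circ\hat H_t|$. By the formula above, this involves $v$ but not $\partial_w v$. So it is H\"older on $W_{F_0}\cup J(F_0)$ and restricts to the original $\dot\phi_0$ on $J(F_0)$. Only the difference $g\circ F_0-g$ extends to the Julia set, not $g$ itself. This form on $W_{F_0}$ is precisely what the proof of Proposition~\ref{thm_main_2} uses.
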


\begin{proof}
From $F_t\circ H_t=H_t\circ F_0$ we obtain
\[
\Jac F_t(H_t)= (\Jac H_t)^{-1}\,(\Jac H_t\circ F_0)\,\Jac F_0.
\]
Taking logarithms and differentiating at $t=0$, using $\Jac H_0\equiv1$ 
yields the stated identity.
\end{proof}

\begin{prop} \label{thm_main_2}
With the above notation,
 we have
\[{\rm Var}(\dot\phi_0,m_0) = 
\frac{\log d}{2}
 \cdot I
\left(
\frac{\partial v}{\partial w}\right)
\]
where
\[
I
\left(
\frac{\partial v}{\partial w}\right) \defeq
\lim_{r \to 1^+} \frac{1}{|\log(r-1)|}
 \int_{S^1}
\int_{|w|=r}\left|\frac{\partial v}{\partial w}(z,w)\right|^2
d\Leb_r (w)
d\Leb_1(z)
\]
and $\Leb_r$ is the normalized Lebesgue
probability
 measure on the circle $|w|=r$.
\end{prop}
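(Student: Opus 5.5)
The plan is to follow McMullen's telescoping argument, carried out on the basin of infinity $W_{F_0}=S^1\times 1/\Delta$ rather than on $J(F_0)=S^1\times S^1$. Set $g\defeq \partial v/\partial w$; for each $z$ it is holomorphic in $w$ on $|w|>1$ and bounded near $w=\infty$.

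\emph{Step 1: a coboundary on the whole basin.} The proof of Lemma~\ref{lem_coboundary_identity} only uses $F_t\circ H_t=H_t\circ F_0$ and $\Jac H_0\equiv 1$. By Lemma~\ref{lem_interior_conjugacy} both hold on $W_{F_0}$. So the function
\[
\Phi\defeq \left.\frac{d}{dt}\right|_{t=0}\Bigl(-\log\bigl|\Jac F_t\circ H_t\bigr|\Bigr)
\]
on $W_{F_0}$ satisfies $\Phi=-\Re g+\Re g\circ F_0$ there. By Proposition~\ref{prop_param_regularity}, $\Phi$ is $\alpha$-H\"older on $W_{F_0}\cup J(F_0)$, and its boundary value on $S^1\times S^1$ is $\dot\phi_0$. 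Telescoping gives, on $W_{F_0}$,
\[
S_n\Phi=\Re g\circ F_0^n-\Re g .
\]

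\emph{Step 2: choice of radii and comparison with the Julia set.} Fix $R>1$ and let $r_n>1$ be defined by $r_n^{d^n}=R$. Then $r_n-1\sim \log R/d^n$, so $|\log(r_n-1)|=n\log d+O(1)$. Write $\rho_n(z,w)\defeq(z,r_nw)$ for $(z,w)\in S^1\times S^1$. Since $F_0^j\circ\rho_n$ and $F_0^j$ differ only by the radius factor $r_n^{d^j}$, and $r_n^{d^j}-1\lesssim d^{j-n}$, H\"older continuity of $\Phi$ gives
\[
\bigl|S_n\Phi\circ\rho_n - S_n\dot\phi_0\bigr|\le C\sum_{j<n}d^{(j-n)\alpha}=O(1)
\]
uniformly on $S^1\times S^1$. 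Moreover $F_0^n\circ\rho_n$ lands on the circle $|w|=R$, where $g$ is bounded. Hence
\[
\|S_n\dot\phi_0\|_{L^2(m_0)}=\|\Re g\circ\rho_n\|_{L^2(m_0)}+O(1).
\]

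\emph{Step 3: from $\Re g$ to $|g|$.} We have $(\Re g)^2=\tfrac12|g|^2+\tfrac12\Re(g^2)$. For fixed $z$, $g^2$ is holomorphic on $|w|>1$ including at $\infty$. Its circle average therefore equals its value at $\infty$, which is bounded uniformly in $z$. It follows that
\[
\int|\Re g\circ\rho_n|^2\,dm_0=\tfrac12\int_{S^1}\int_{|w|=r_n}|g|^2\,d\Leb_{r_n}\,d\Leb_1+O(1).
\]
The variance exists as a limit, with $\|S_n\dot\phi_0\|^2\sim n\,\mathrm{Var}(\dot\phi_0,m_0)$. Combining Steps 2 and 3 with $n\sim|\log(r_n-1)|/\log d$ gives the formula along the sequence $r_n$. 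Since the $O(1)$ errors are additive in the norm, they disappear after dividing by $n$ even if the norm grows like $\sqrt n$.

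\emph{Step 4: the full limit and the main obstacle.} To pass from the sequence $r_n$ to the full limit $r\to1^+$, I would use that $r\mapsto\int_{|w|=r}|g|^2\,d\Leb_r$ is monotone, since $|g|^2$ is subharmonic in $w$ on an exterior domain including $\infty$. In addition, $|\log(r-1)|$ changes only by $O(1)$ between consecutive $r_n$, so sandwiching works. Alternatively, $R$ may be taken arbitrary in $(1,R_0]$.

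I expect the main obstacle to be Step~1. It requires justifying that $\Phi$ is H\"older up to $J(F_0)$ with boundary value $\dot\phi_0$, that is, that the derivative in $t$ commutes with the boundary extension. I would obtain this from Proposition~\ref{prop_param_regularity} together with the Cauchy-estimate argument, which gives $C^1$ dependence in the H\"older norm on the closure $W_{F_0}\cup J(F_0)$.
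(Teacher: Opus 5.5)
Your proposal is correct and follows essentially the same route as the paper: you telescope the coboundary relation on the basin along radii $r_n=R^{d^{-n}}$ (the paper takes $R=e$), compare Birkhoff sums on $S^1\times S^1$ with those on $|w|=r_n$ via H\"older continuity, and convert $|\Re\,\partial_w v|^2$ into $\tfrac12|\partial_w v|^2$ by the mean value property. Your treatment of the cross term and your monotonicity/sandwich argument for the full limit $r\to1^+$ spell out details the paper only sketches ("the argument can be easily adapted").
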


\begin{proof}
For simplicity, we will denote $g \defeq\Re \left(\frac{\partial v}{\partial w}\right)= \Re ( \partial_w v)$,
 and observe that
we have $\dot \phi_0 =- g + g\circ F_0$ on $W_{F_0} =S^1\times (1/\Delta)$.
Since $\partial_w v(z,\cdot)$ is holomorphic on $\{|w|>1\}$ and vanishes at
infinity, we have
\[
\int_{|w|=r}
\left|\Re
\left(\frac{\partial v}{\partial w} (z,w)\right)
\right|^2\,d\Leb_r(w)
=\frac12\int_{|w|=r}\left|
\left(\frac{\partial v}{\partial w} (z,w)\right)
\right|^2\,d\Leb_r (w)
\]
for every $r>1$ and  $z\in S^1$.
 Therefore,
it is enough to show the
identity
\[
{\rm Var}(\dot\phi_0,m_0)
=
(\log d) \cdot
\lim_{r \to 1^+} \frac{1}{|\log(r-1)|}
 \int_{S^1}\int_{|w|=r}
|g|^2
d\Leb_r 
(w)
d\Leb_1 (z).
\]

We will show the assertion for a specific choice of $r_n\to 1$, the argument can be easily
adapted
to handle the general limit.
Recall that we have 
\[
{\rm Var} (\dot\phi_0,m_0) = \lim_{n\to \infty}
\frac{1}{n}
\int_{J(F_0)}
\left|
 \sum_{j=0}^{n-1} \dot \phi_0 \circ F_0^j\right|^2\,d m_0.
\]
Set
 $r_n\defeq e^{d^{-n}}\sim 1+ d^{-n}$
 and $Z_n(z,w)\defeq(z,r_n w)\in W_{F_0}$.
 Using the fact that $Z_n(z,w)\to(z,w)$ exponentially
  fast and
  $\dot \phi_0$ is H\"older continuous
  in a neighbourhood of $S^1\times S^1$,
   for every
 $(z,w)\in S^1\times S^1$
 we obtain
\[
 \sum_{j=0}^{n-1}
 \dot \phi_0 \circ F_0^j (z,w)
 =
 \sum_{j=0}^{n-1}
 \dot \phi_0 \circ F^j_0 (Z_n (z,w))
  + O(1), \qquad n\to \infty.
\]
We obtain
from Lemma \ref{lem_coboundary_identity} that
\[
\sum_{j=0}^{n-1}
\dot \phi_0 \circ F_0^j ( Z_n (z,w))
=-g(Z_n(z,w))
+
g\circ F_0^{n} (Z_n(z,w))
=-g(Z_n(z,w))
+O(1), \qquad n\to \infty,
\]
where 
 $g\circ F^{n}_0 (Z_n(z,w))$
 is uniformly bounded as all the points 
 $F^{n}_0 (Z_n(z,w))$ belong to the circle of radius
  $(e^{d^{-n}})^{d^n}=e$.
  It follows that
\[\begin{aligned}
{\rm Var} (\dot\phi_0,m_0) & =  \lim_{n\to \infty}
\frac{1}{n}
\int_{J(F_0)}
|g(Z_n(z,w))
|^2 
dm_0
\\ &=
 \lim_{n\to \infty}
\frac{1}{n}
\int_{S^1}
\int_{|w|=r_n}
|g (z,w)
|^2
d\Leb_{r_n} (w)
 d\Leb_1(z).
\end{aligned}
\]
The assertion follows taking into account the asymptotic
$|\log(r_n-1)|\sim n\log d$.
\end{proof}

\begin{rmk}
Following the terminology of \cite{McMullen08}, we can
 say that 
$\dot \phi_0$ is the \emph{virtual coboundary} of 
$\Re (\partial_w v)$
on $W_{F_0}=S^1\times (1/\Delta)$.
The above arguments show more generally that, if 
 $h$ is a
 virtual coboundary of a continuous function $g$
 on $W_{F_0}$ with
$\int h\,dm_0=0$,
then
\[
{\rm Var}(h,m_0)=(\log d)\cdot I(g),
\]
where
\[
I(g)=
\lim_{r\to 1^+}
\frac{1}{|\log(r-1)|}
\int_{S^1}
\int_{|w|=r}|g(z,w)|^2\,
d\Leb_{r} (w)
\,
d\Leb_1 (z).
\]
\end{rmk}

\subsection{A computation of $I(\partial_w v)$}
\label{sec_CM5}

We
now consider the explicit path
\[
F_t(z,w)=(z^d,\,w^d+t(c_1(z)w^{d-1}+\cdots+c_d(z))),
\qquad t\in(-\varepsilon,\varepsilon).
\]

By Proposition
\ref{thm_main_2},
 the term
${\rm Var}(\delta_0\dot\phi_0,m_0)$ 
appearing in 
Proposition \ref{thm_delta_second_derivative}
can be expressed in terms of the asymptotic energy
$I(\partial_w v)$ of the infinitesimal conjugacy.
The next proposition gives an explicit expression 
for $I(\partial_w v)$
 in terms of
the functions $c_k$.

\begin{prop}\label{prop_extra_term}
We have
\[
I
\left(
\frac{\partial v}{\partial w}\right)
=
\frac{1}{d^2\log d
}
\int_{S^1}
\sum_{k=1}^d k^2\,|c_k(z)|^2\,
d\Leb_1 (z).
\]
\end{prop}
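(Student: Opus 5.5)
The plan is to solve explicitly for the infinitesimal conjugacy $v$ through a twisted cohomological equation. Then I expand $\partial_w v$ as a Laurent series in $w$ on $\{|w|>1\}$ and use orthogonality of monomials on the circles $|w|=r$ to read off the growth rate of the energy integral. The only earlier inputs I need are the conjugacy relation of Lemma~\ref{lem_interior_conjugacy} and the $C^2$ dependence from Proposition~\ref{prop_param_regularity}, which allows differentiation in $t$.

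\textbf{Step 1: the equation for $v$.} Write $P(z,w)\defeq\sum_{k=1}^d c_k(z)w^{d-k}$ and $H_t(z,w)=(z,w+tv(z,w)+O(t^2))$. Differentiating the second coordinate of $F_t\circ H_t=H_t\circ F_0$ at $t=0$ gives
\[
d\,w^{d-1}v(z,w)+P(z,w)=v(z^d,w^d),
\]
that is, $v=\bigl(v\circ F_0-P\bigr)/(\partial_w F_0)$. Iterating, and using $\partial_w(F_0^{n})=d^n w^{d^n-1}$ for the second coordinate, I obtain the formal solution
\[
v(z,w)=-\sum_{n\ge0}\frac{P\bigl(z^{d^n},w^{d^n}\bigr)}{d^{n+1}w^{d^{n+1}-1}}
=-\sum_{n\ge0}\sum_{k=1}^d\frac{c_k(z^{d^n})}{d^{n+1}}\,w^{1-kd^n}.
\]
This series converges locally uniformly on $|w|>1$. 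Uniqueness of the solution of the cohomological equation within the class of functions with the growth at infinity forced by the B\"ottcher normalization (tangency to the identity) identifies it with the $v$ of \eqref{eq:v}. Differentiating term by term gives
\[
\partial_w v(z,w)=\sum_{n\ge0}\sum_{k=1}^d\frac{(kd^n-1)\,c_k(z^{d^n})}{d^{n+1}}\,w^{-kd^n}.
\]

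\textbf{Step 2: energy on $|w|=r$.} Group the terms by exponent $m=kd^n$. By orthogonality of $w^{-m}$ on $|w|=r$,
\[
\int_{|w|=r}|\partial_w v|^2\,d\Leb_r=\sum_m |a_m(z)|^2 r^{-2m},
\]
where $a_m(z)$ is the sum of the coefficients with $kd^n=m$. I then integrate in $z$ and use the invariance of $\Leb_1$ under $z\mapsto z^{d^n}$, which gives $\int|c_k(z^{d^n})|^2\,d\Leb_1=\int|c_k|^2\,d\Leb_1$. Up to $O(d^{-n})$ corrections, each level $n$ therefore contributes
\[
\frac{1}{d^2}\sum_k k^2\int_{S^1}|c_k|^2\,d\Leb_1 \cdot r^{-2kd^n}.
\]
As $r\to1^+$ the weight $r^{-2kd^n}$ is close to $1$ for $d^n\ll (r-1)^{-1}$ and decays super-exponentially for $d^n\gg (r-1)^{-1}$. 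The number of effective levels is therefore $|\log(r-1)|/\log d+O(1)$. Dividing by $|\log(r-1)|$ then yields the claimed value $\frac{1}{d^2\log d}\int\sum_k k^2|c_k|^2\,d\Leb_1$.

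\textbf{Main obstacle: coinciding exponents.} The delicate point is that different pairs can give the same exponent; for instance $(k,n)=(d,n)$ and $(1,n+1)$ both give $m=d^{n+1}$. This produces cross terms per level of the form
\[
\int_{S^1} c_d(z^{d^n})\,\overline{c_1(z^{d^{n+1}})}\,d\Leb_1=\int_{S^1} c_d(u)\,\overline{c_1(u^d)}\,d\Leb_1(u),
\]
and more generally $\int c_k(u)\,\overline{c_{k'}(u^{d^j})}$ whenever $k=k'd^j$. If nonzero, these would add a constant amount per level and hence change the limit.

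To handle them I would first try a Fourier expansion of the $c_k$ in $z$. Writing $c_k(z)=\sum_j c_{k,j}z^j$, the cross term equals $\sum_j c_{d,dj}\,\overline{c_{1,j}}$. I would then check whether the normalization in the setting of the statement makes these vanish: either the paper's conventions (e.g.\ the regular case $\deg c_j\le d-j$, where $c_d$ is constant and $c_1$ is affine) or the conjugation freedom $w\mapsto w+\beta(z)$ that could remove $c_1$. Failing that, I would recompute carefully whether the $k=d$ terms really land on the same exponents once the constant term of $v$ is accounted for.

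I expect this bookkeeping to be the main difficulty. The rest of the argument consists of routine estimates of the tails $n\gtrsim|\log(r-1)|/\log d$ and of the $O(d^{-n})$ errors in the coefficients $(kd^n-1)/d^{n+1}$, and these estimates contribute only $O(1)$ to the sum.
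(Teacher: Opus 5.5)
Your solution of the cohomological equation is correct, and so is your formula
\[
\partial_w v(z,w)=\sum_{n\ge0}\sum_{k=1}^d\frac{(kd^n-1)\,c_k(z^{d^n})}{d^{n+1}}\,w^{-kd^n}.
\]
The rest of your strategy is the paper's: orthogonality on $|w|=r$, invariance of $\Leb_1$ under $z\mapsto z^{d^n}$, and about $|\log(r-1)|/\log d$ effective levels. The gap is exactly the point you leave open, and it cannot be closed in favour of the stated formula. Since $k\le d$, the only coincidence of exponents is $(k,n)=(d,n)\sim(1,n+1)$. There the coefficient of $w^{-d^{n+1}}$ is $\frac{d^{n+1}-1}{d^{n+1}}\bigl(c_d(z^{d^n})+\frac1d\,c_1(z^{d^{n+1}})\bigr)$. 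Carrying your own bookkeeping through therefore gives
\[
I\left(\frac{\partial v}{\partial w}\right)=\frac{1}{d^2\log d}\left(\int_{S^1}\sum_{k=1}^d k^2|c_k|^2\,d\Leb_1+2d\,\Re\int_{S^1}c_d(u)\,\overline{c_1(u^d)}\,d\Leb_1(u)\right).
\]
This agrees with the statement only when the real part of that correlation vanishes, e.g.\ when $c_1\equiv0$ (McMullen's centred normalization) or $c_d\equiv0$. The paper's proof does not see this because it asserts that all exponents are distinct; its displayed $\partial_w v$ also carries the slip $w^{-kd^n-1}$. The same correction propagates to Theorem~\ref{thm_main_sk}.

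None of your proposed rescues works.

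\textbf{Regular case.} Here $c_d\equiv b$ is constant, and $c_1$ has degree $\le d-1$, so it is not necessarily affine. The cross term is then $b\,\overline{c_1(0)}$, which is generically nonzero. For instance, take $d=2$ and $c_1\equiv c_2\equiv1$. The coefficient of $w^{-2^m}$ is $\frac{3(2^m-1)}{2^{m+1}}\to\frac32$, so $I=\frac{9}{4\log2}$, not $\frac{5}{4\log2}$. This is consistent with Ruelle's expansion, since $w^2+t(w+1)$ is conjugate by a translation to $u^2+\frac32t-\frac{t^2}{4}$.

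\textbf{Normalizing away $c_1$.} The conjugation $w\mapsto w-tc_1(z)/d$ has Jacobian $1$. It therefore leaves $\delta_t$ unchanged, and hence also $I=4\ddot\delta_0$. However, it replaces $c_d$ by $c_d+c_1(z^d)/d$, and the stated right-hand side changes by exactly the cross term. This is a second proof that the cross term must be present.

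\textbf{Constant term of $v$.} The constant term $-c_1(z)/d$ of $v$ is killed by $\partial_w$ and shifts no exponent, so the coincidence is genuine.

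So the missing step is not unfinished bookkeeping. The identity is false without an extra hypothesis such as $\Re\int c_d(u)\overline{c_1(u^d)}\,d\Leb_1(u)=0$, and what your argument actually proves is the corrected formula displayed above.
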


 For simplicity,
we will set $v_z(\cdot) = v(z,\cdot)$, where
$v(z,\cdot)$ is as in \eqref{eq:v}.

\begin{lem}\label{lem_expansion_v}
For each $z\in S^1$,
 we have
\begin{equation}\label{eq_v_outside}
v_z(w)
=
-\frac{w}{d}
\sum_{k=1}^d\sum_{n=0}^{\infty}
\frac{c_k(z^{d^n})}{d^n}\,w^{-k d^n}.
\end{equation}
\end{lem}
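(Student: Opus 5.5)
The plan is to differentiate the conjugacy relation of Lemma~\ref{lem_interior_conjugacy} with respect to $t$ at $t=0$. This gives a functional equation for $v_z$, which we then solve by iterating along the forward orbit of $z$ under $z\mapsto z^d$.

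Write $P_z(w)\defeq\sum_{k=1}^d c_k(z)w^{d-k}$, so that $q_{t,z}(w)=w^d+tP_z(w)$. By Lemma~\ref{lem_interior_conjugacy}, on $W_{F_0,z}=1/\Delta$ we have
\[
q_{t,z}\bigl(H_{t,z}(w)\bigr)=H_{t,z^d}(w^d).
\]
By Proposition~\ref{prop_param_regularity} and the holomorphy of $t\mapsto H_t(z,w)$, we may expand $H_{t,z}(w)=w+t\,v_z(w)+O(t^2)$, pointwise and locally uniformly in $w$. Differentiating at $t=0$ then gives
\[
d\,w^{d-1}v_z(w)+P_z(w)=v_{z^d}(w^d),
\qquad\text{that is,}\qquad
v_z(w)=-\frac{P_z(w)}{d\,w^{d-1}}+\frac{v_{z^d}(w^d)}{d\,w^{d-1}} .
\]

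Next we iterate this identity $N$ times, substituting $z^{d^n}$ and $w^{d^n}$ at step $n$. The accumulated denominators telescope:
\[
\prod_{j=0}^{n}d\,(w^{d^j})^{d-1}=d^{n+1}w^{d^{n+1}-1}.
\]
Hence
\[
v_z(w)=-\sum_{n=0}^{N-1}\frac{P_{z^{d^n}}(w^{d^n})}{d^{n+1}w^{d^{n+1}-1}}+\frac{v_{z^{d^N}}(w^{d^N})}{d^N w^{d^N-1}} .
\]
We also have
\[
\frac{P_{z^{d^n}}(w^{d^n})}{w^{d^{n+1}}}=\sum_{k=1}^d c_k(z^{d^n})\,w^{-kd^n}.
\]
So the $n$-th term of the sum equals $-\frac{w}{d}\,d^{-n}\sum_k c_k(z^{d^n})w^{-kd^n}$, which is exactly the $n$-th term of \eqref{eq_v_outside}.

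The main point to check is that the remainder tends to $0$, and for this we need a bound $|v_{z'}(w')|\le C|w'|$ that is uniform in $z'\in S^1$ and $|w'|\ge R$. I would obtain it from the normalization in Proposition~\ref{prop_sester}. Each $\varphi_{F_t,z}$ is tangent to the identity at infinity, locally uniformly in $t$ and $z$ by continuity. Therefore $H_{t,z}(w)-w$ is $O(|w|)$, and in fact bounded, for $|w|\ge R$, uniformly in $z$ and in small complex $t$. The Cauchy estimate in $t$ then transfers this bound to $v_z=\partial_t H_{t,z}|_{t=0}$. With the bound in hand, the remainder is at most
\[
\frac{C|w|^{d^N}}{d^N|w|^{d^N-1}}=\frac{C|w|}{d^N}\longrightarrow 0
\]
for $|w|\ge R$. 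This proves \eqref{eq_v_outside} on $\{|w|\ge R\}$.

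Finally, the $c_k$ are bounded on $S^1$, so the double series in \eqref{eq_v_outside} converges locally uniformly on $\{|w|>1\}$ and defines a holomorphic function there. Since $v_z$ is also holomorphic on $1/\Delta$, the identity theorem extends the equality to all of $W_{F_0,z}=1/\Delta$.
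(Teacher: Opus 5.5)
Your proof is correct and follows the same route as the paper: differentiate $F_t\circ H_t=H_t\circ F_0$ at $t=0$ to get $v_{z^d}(w^d)=d\,w^{d-1}v_z(w)+\sum_k c_k(z)w^{d-k}$, then solve this equation. The paper only checks that the series is a solution and tacitly relies on uniqueness under the normalization $v_z(w)\to 0$. That normalization actually fails as soon as $c_1(z)\neq 0$, since the series tends to $-c_1(z)/d$. Your telescoped iteration, together with the uniform bound $|v_{z'}(w')|\le C|w'|$, proves the uniqueness the paper assumes, and does so under the correct normalization.
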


Observe that the series above 
converges normally on $\{|w|>1+\eta\}$ for every $\eta>0$.

\begin{proof}
Differentiating the conjugacy relation
$F_t\circ H_t=H_t\circ F_0$ at $t=0$,
we obtain that,
for every $z\in S^1$,
 $v_z$ 
is the solution of the functional equation
\[
v_{z^d}(w^d)
=
d\,w^{d-1}v_z(w)
+
\sum_{k=1}^d c_k(z)w^{d-k}
\]
satisfying
$v_z(w)\to0$ as $w\to\infty$.
A direct computation shows that  
the series~\eqref{eq_v_outside}
satisfies these conditions.
\end{proof}

\begin{proof}[Proof of Proposition \ref{prop_extra_term}]
Differentiating~\eqref{eq_v_outside}
 term by term gives
\[
\frac{\partial v}{\partial w}(z,w)
=
\frac{1}{d}
\sum_{k=1}^d\sum_{n=0}^{\infty}
\left(
\frac{c_k(z^{d^n})}{d^n}\,w^{-k d^n}
+
k\,c_k(z^{d^n})\,w^{-k d^n-1}
\right).
\]
The terms involving the denominator $d^{n}
$ are uniformly summable and negligible in the
logarithmic limit defining $I(\partial_w v)$.
Hence only the second sum contributes, and we obtain
\[
I\!\left(\frac{\partial v}{\partial w}\right)
=
\lim_{r\to1^+}
\frac{1}{|\log(r-1)|}
\int_{S^1}
\int_{|w|=r}
\left|
\frac{1}{d}
\sum_{k=1}^d
\sum_{n=0}^{\infty}
k\,c_k(z^{d^n})\,w^{-k d^n-1}
\right|^2
\, 
d\Leb_r(w)\,
d\Leb_1(z).
\]

Since all exponents $-k d^n-1$ are distinct, the functions
$w\mapsto w^{-k d^n-1}$
 are orthogonal in $L^2(\Leb_r)$.
Therefore, for every $z\in S^1$, we have
\[
\int_{|w|=r}
\left|
\frac{1}{d}
\sum_{k=1}^d
\sum_{n=0}^{\infty}
k\,c_k(
z^{d^n}
)\,w^{-k d^n-1}
\right|^2
\,
d\Leb_r (w)
=
\frac{1}{d^2}
\sum_{k=1}^d\sum_{n=0}^{\infty}
k^2|c_k(z^{d^n})|^2\,|r|^{-2k d^n-2}.
\]
For any fixed $r>1$,
 the factor $|r|^{-2k d^n-2}$
  is close to $1$ for
$n\le N(r)$ and exponentially small for $n>N(r)$, where
\[
N(r)\sim\frac{|\log(r-1)|}{\log d}.
\]
Hence,
\[
\int_{|w|=r}
\left|
\frac{1}{d}
\sum_{k=1}^d
\sum_{n=0}^{\infty}
k\,c_k(z^{d^n})\,w^{-k d^n-1}
\right|^2
\,
d\Leb_r (w)
=
\frac{1}{d^2}
\sum_{k=1}^d\sum_{n=0}^{N(r)}
k^2|c_k(z^{d^n})|^2
+
o(|\log(r-1)|).
\]

Dividing by $|\log(r-1)|$ and letting $r\to1$, we may apply the Birkhoff
ergodic theorem to the sequence $\{z^{d^n}\}_{n\in \mathbb N}$.
For 
$\Leb_1$-almost every $z\in S^1$
 we obtain
\[
\lim_{r\to1^+}
\frac{1}{|\log(r-1)|}
\int_{|w|=r}
\left|\frac{\partial v}{\partial w}(z,w)\right|^2\,
d\Leb_r (w)
=
\frac{1}{d^2\log d}
\sum_{k=1}^d
k^2\int_{S^1}|c_k(z')|^2\,
d\Leb_1(z')
\]

Integrating with respect to $z$
(observe that the right hand side is constant)
 yields the assertion.
\end{proof}

\subsection{End of the proof of Theorem \ref{thm_main_sk}}

We can now conclude the proof of Theorem \ref{thm_main_sk}.
By \cite[Theorem~1.3]{BHMane}, for every 
 $t\in(-\varepsilon,\varepsilon)$ we have $2\,\VD_{f_t}(J_t)=\delta_t$. The assertion then follows
  from Lemma \ref{l:delta} and
 Propositions \ref{thm_delta_second_derivative}, \ref{thm_main_2}, and \ref{prop_extra_term}.

\begin{rmk}
We observe that the same proof applies when the first component of $F_t$
is $z^{d'}$.
In this case, the factor $2\log d$ appearing in the denominator of the
coefficient in Theorem~\ref{thm_main_sk} is replaced by $\log d+\log d'$.
This change comes from the denominator in the second-derivative formula
of Proposition~\ref{thm_delta_second_derivative}, where
$\phi_0=-\log|\Jac F_0|=-(\log d+\log d')$.
\end{rmk}

\printbibliography
\end{document}